\newtheorem{theorem}{Theorem}[section]
\newtheorem{proposition}[theorem]{Proposition}
\newtheorem{corollary}[theorem]{Corollary}
\newtheorem{lemma}[theorem]{Lemma}
\newtheorem{problem}{Problem}
\newtheorem{newthe}[problem]{Theorem}
\theoremstyle{definition}
\newtheorem{rem}[theorem]{Remark}
\newtheorem{example}[theorem]{Example}
\def\rk{\mathrm{rank}}
\def\beq{\begin{equation}}
\def\eeq{\end{equation}}
\def\bspl{\begin{split}}
\def\espl{\end{split}}
\def\bgm{\begin{pmatrix}}
\def\edm{\end{pmatrix}}
\def\q{\theta}
\def\Q{\Theta}
\begin{document}
\title{Minimal Isometric Immersions of Flat n-tori into Spheres}
\author{Ying L\"u\textsuperscript{1},\quad Peng Wang\textsuperscript{2},\quad Zhenxiao Xie 
\textsuperscript{\Letter~3}}

\date{}
\maketitle
\footnotetext[1]{School of Mathematical Sciences, Xiamen University, Xiamen, 361005, P. R. China. Email: {lueying@xmu.edu.cn}}
\footnotetext[2]{School of Mathematics and Statistics, Key Laboratory of Analytical Mathematics and Applications (Ministry of Education), FJKLAMA, Fujian Normal University, 350117 Fuzhou, P.R. China. Email: {pengwang@fjnu.edu.cn}}
\footnotetext[3]{School of Mathematical Sciences, Beihang University,  Beijing 100191, P. R. China. %
Email: {xiezhenxiao@buaa.edu.cn}}

\vspace{-0.25 cm}
\begin{abstract}
In 1985, Bryant stated that a flat $2$-torus admits a minimal isometric immersion into some round sphere if and only if a certain rationality condition is satisfied. 
We show that the rationality criterion is no longer a necessary, but a sufficient condition for a flat $n$-torus to admit minimal isometric immersions into spheres. We also derive an upper bound for the algebraic irrationality degree of such immersion. This bound is sharp and equals 4 if $n=3$, and explicit embedded examples are provided respectively for each possible degree. A non-homogeneous example is also presented to show that the minimal isometric immersion of flat $n$-tori is no longer necessarily homogeneous when $n\geq 3$. 
Moreover, we establish a deformation theorem that every flat n-torus admitting a minimal isometric spherical immersion 
can  be isometrically,  minimally and homogeneously immersed into a  sphere of dimension at most $n^2+n-1$. 
\end{abstract}

\indent{\bf Keywords:} Minimal immersions; flat tori; lattices; isometric immersions

\indent{\bf MSC(2020):\hspace{2mm} 53C42, 53C40, 11H06}
\section{Introduction}
The investigation of minimal isometric immersions of space forms into round spheres has been a subject of significant interest in differential geometry, owing to its deep connections with representation theory and spectral geometry. 

The $2$-dimensional 
case has been completely resolved through fundamental contributions by Calabi \cite{Calabi}, Kenmotsu \cite{Kenmotsu}, and Bryant \cite{Bryant}. The only minimal surfaces of constant positive Gaussian curvature 
in $\mathbb{S}^n$ are the round $2$-spheres and the {
Veronese-Bor\'{u}vka} $2$-spheres. Surfaces of constant negative Gaussian curvature cannot be minimally and isometrically immersed into any sphere. While minimal isometric immersions of $\mathbb{R}^2$ into $\mathbb{S}^n$ can be explicitly parameterized, Bryant \cite[page 270]{Bryant} remarked that a flat $2$-torus $T^2 = \mathbb{R}^2/\Lambda_2$ admits a minimal isometric immersion into some sphere $\mathbb{S}^n$ if and only if some rational condition satisfies. In terms of the Gram matrix $Q$ of $\Lambda_2$, the rational condition can be stated as $Q$ has exclusively rational entries up to a dilation.  

For higher-dimensional space forms, only the case of positive sectional curvature has been investigated very well, primarily through the foundational work of do Carmo and Wallach \cite{doCarmo-Wallach}. Their work established a deep connection between minimal isometric immersions of spheres into spheres and representation theory, leading to what is now known as the do Carmo-Wallach theory. They showed there are in general many minimal isometric immersions of $\mathbb{S}^m$ into $\mathbb{S}^n$, and they can be parameterized by a compact convex body of finite dimension. These results were subsequently extended to general isotropy-irreducible Riemannian homogeneous spaces in \cite{Li,Wang-Ziller, TD}. The extension builds on Takahashi's foundational work \cite{Takahashi} establishing standard minimal isometric immersions   through the isotropy representation. There are also many works  devoted to investigating the moduli space of minimal isometric immersions of $\mathbb{S}^m$ into spheres and related problems, such as the determination of the exact dimension of this moduli space \cite{To1}, and the study of minimal target dimension \cite{Ma1, DZ, TZ, EW}. For further developments and additional references on related topics, we refer the reader to \cite{GT, E1, Toth, Tang} and the references therein. 

Since flat space forms are not isotropy-irreducible, the do Carmo-Wallach theory has limited applicability in this context. While it can be employed to study eigenmaps (i.e., harmonic maps with constant energy density) from flat $n$-tori into spheres \cite{Park-Urakawa, Park-Oh}, it does not extend in general to the investigation of those minimal isometric immersions. 
{A natural problem} in this field is that, except for the case of $2$-tori, it remains unknown {\em which flat $n$-tori admit minimal isometric immersions into spheres}. According to Moore's result \cite{Morre}, any flat $n$-torus admitting a minimal isometric immersion into $\mathbb{S}^{2n-1}$ must be either the Clifford $n$-torus or one of its coverings. For further results concerning isometric immersions of space forms into space forms, we refer to the survey articles \cite{Chen, Borisenko} and the book \cite{Dajczer}.  

Recently, the authors \cite{LWX} classified all minimal isometric immersions of flat $3,4$-tori into spheres by the first eigenfunctions (called $\lambda_1$-minimal isometric immersions for short). The $\lambda_1$-minimal  immersions of Riemannian manifolds into spheres are highly related to the concept of conformal volume in submanifold geometry and the first conformal spectrum in spectral geometry, see \cite{Li-Yau, Mon-Ros, Soufi-Ilias, Nadi, Tang-Xie-Yan, Tang-Yan, Narita, Karpukhin-Stern} and references therein. Our classification is based on establishing new variational characterizations for minimal flat $n$-tori, formulated through their underlying lattice structures (see Theorem~\ref{thm-vari} in Section~\ref{sec3}). After publishing our paper \cite{LWX}, we {realized} that $\lambda_1$-minimal isometric immersions of flat $n$-tori is  deeply connected to the semi-eutatic lattices  of rank $n$ in geometry of numbers \cite{Martinet}.  


{In this paper, we employ the variational characterizations to investigate the problem of determining which flat $n$-tori admit minimal isometric immersions into spheres. An immediate consequence is that a generic flat $n$-torus admits no minimal isometric immersion into any sphere,  which was shown by Bryant \cite{Bryant} for the $2$-dimensional case. 
We first extend Bryant's criterion for flat 2-tori to arbitrary dimensions, providing a sufficient condition for minimal isometric immersions. We call a flat $n$-torus $\mathbb{R}^n/\Lambda_n$ rational, if up to a dilation, the Gram matrix $Q$ in \eqref{eq-gram} of the lattice $\Lambda_n$ is rational (i.e., $Q\in GL(n,\mathbb{Q})$). }
\begin{newthe}\label{thm-1}
Up to a dilation, every flat rational $n$-torus admits a minimal isometric immersion into some spheres.  
\end{newthe}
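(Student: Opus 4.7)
My plan is to invoke the variational characterization of minimal flat tori (Theorem~\ref{thm-vari}, cf.\ \cite{LWX}): a minimal isometric immersion of $\mathbb{R}^n/\Lambda_n$ into some sphere exists if and only if there is a finite subset $Y\subset\Lambda_n^*\setminus\{0\}$ of vectors of a common squared length $R$, together with positive weights $\{a_v\}_{v\in Y}$, satisfying the semi-eutactic identity
\begin{equation*}
    \sum_{v\in Y}a_v\,vv^T\;=\;c\,I_n,\qquad c>0.
\end{equation*}
Given such data, Takahashi's theorem yields the minimal immersion $y\mapsto(\sqrt{a_v}\,e^{2\pi i\langle v,y\rangle})_{v\in Y}$ into a sphere in $\mathbb{C}^{|Y|}\cong\mathbb{R}^{2|Y|}$, after a final rescaling of the flat metric to match the common Laplace eigenvalue $4\pi^2 R$ with $n$. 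The task is thus purely lattice-theoretic: locate a ``balanced shell'' inside the rational dual lattice $\Lambda_n^*$.

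By first clearing denominators in $Q^{-1}$ through a preliminary dilation of $\Lambda_n$, I may assume the squared-length function on $\Lambda_n^*$ takes values in $\mathbb{Z}_{\geq 0}$, decomposing $\Lambda_n^*\setminus\{0\}$ into finite shells $S_R=\{v\in\Lambda_n^*:|v|^2=R\}$ for $R\in\mathbb{Z}_{>0}$. Ignoring the common-length condition momentarily, a rational orthogonal basis $v_1,\ldots,v_n$ of $\Lambda_n^*\otimes\mathbb{Q}$ obtained via Gram--Schmidt over $\mathbb{Q}$ (and scaled into $\Lambda_n^*$) already realizes $I_n=\sum_i(1/|v_i|^2)\,v_iv_i^T$, so unrestricted positive balance is immediate. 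The difficulty is precisely that the $v_i$'s generally have distinct squared lengths, and the semi-eutactic identity is required to live on a single shell.

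The main obstacle is therefore enforcing the common-length condition. My plan is to prove that for $R$ large enough among the squared lengths represented by $\Lambda_n^*$, the convex cone of $\{vv^T:v\in S_R\}$ already contains $I_n$ in its interior, so that suitable positive weights $a_v$ exist by finite-dimensional convex feasibility (a Farkas-style argument, plus the fact that supporting hyperplanes $X$ of the full-cone $\{vv^T:v\in\Lambda_n^*\setminus\{0\}\}$ must have $\mathrm{tr}(X)\geq 0$, via the asymptotic $\sum_{|v|^2\leq T}v^TXv\sim c(\Lambda_n^*)\,T^{(n+2)/2}\,\mathrm{tr}(X)$). Concentrating this balance onto one shell reduces to an equidistribution statement for the rescaled shells $\widetilde{S}_R=\{v/\sqrt{R}:v\in S_R\}\subset\mathbb{S}^{n-1}$: when $n\geq 3$, the Linnik--Duke theorem for integer points on ellipsoids attached to rational positive-definite forms gives equidistribution on the sphere for $R$ along appropriate arithmetic progressions, whence the cone of single-shell outer products $vv^T$ fills the entire positive semidefinite cone in the limit; when $n=2$, the statement recovers Bryant's original rationality criterion, and one can illustrate the mechanism explicitly by the balance of the sign-orbits of $(\pm 3,\pm 2)$ and $(\pm 1,\pm 4)$ on the ellipse $3a^2+2b^2=35$ in the weight ratio $29:19$. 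Equidistribution (or the direct construction in dimension $2$) then produces $R$ with $I_n$ in the cone of $\{vv^T:v\in S_R\}$, and extracting the weights $a_v$ from the resulting nonnegative linear feasibility problem completes the construction.
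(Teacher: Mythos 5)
Your reduction agrees with the paper's: via Takahashi and the matrix--data characterization (Theorem~\ref{thm-vari}, i.e.\ conditions \eqref{eq-AQA} and \eqref{eq:flat}), everything comes down to exhibiting one shell of the (dilated) dual lattice whose outer products $vv^t$ admit a positive combination equal to a multiple of the identity, and your Takahashi step and the $29:19$ example on $3a^2+2b^2=35$ are correct. Where you genuinely diverge is in how the balanced shell is produced. The paper stays entirely inside geometry of numbers: by Vorono\"i's covering theorem \cite{Voronoi} the matrix $Q^{-1}/n$ lies in the Vorono\"i domain of a perfect form; a small perturbation places it in the interior of a polytope spanned by finitely many points which, after rescaling, lie on the rational ellipsoid $\mathcal{Q}$; Lemma~\ref{lem-rational} (density of rational points on a rational ellipsoid, obtained by projecting from one rational point) replaces these by rational points, and clearing denominators gives the integer set $Y$. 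This is elementary, uniform in $n$, and in particular covers $n=2$. Your route instead invokes Linnik--Duke equidistribution of normalized shell points, a much deeper input (for ternary forms this is Duke and Duke--Schulze-Pillot, resting on half-integral weight estimates), valid only for $n\geq 3$ and only for eligible $R$ (squarefree, locally represented, avoiding spinor exceptions), and it leaves $n=2$ essentially to Bryant \cite{Bryant}, i.e.\ to the case the theorem is generalizing.

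Two points would need to be written out before your argument is complete. First, the passage from equidistribution to exact membership: the statement ``the cone of single-shell outer products fills the PSD cone in the limit'' does not by itself put the exact target in the cone at any finite $R$. You need that $\tfrac{1}{n}Q^{-1}$ (suitably normalized) lies in the \emph{relative interior} of $\mathrm{conv}\{xx^t\,:\,x^tQx=1\}$ inside the hyperplane $\{\langle\,\cdot\,,Q\rangle=1\}$, so that density of the normalized shell in the ellipsoid forces the target into the convex hull of finitely many shell points for all large admissible $R$; you must also quote that infinitely many eligible $R$ are actually represented by the specific form (not just its genus). Second, $n=2$: shells of a binary form may contain very few points and shell equidistribution fails for general represented $R$, so either give the direct construction you allude to or explicitly cite Bryant for that case. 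With these repairs your proof works for $n\geq 3$, but the paper's Vorono\"i-plus-rational-density argument is both more elementary and complete for every $n$.
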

The proof of Theorem \ref{thm-1} employs a classical result of Vorono\"i \cite{Voronoi} in geometry of numbers. 
Unlike the $2$-dimensional case, our explicit constructions demonstrate that the rationality condition is not necessary for the existence of minimal isometric immersions in 
higher dimensions. Example~\ref{ex-rank5} $\sim$ Example~\ref{ex-rank4-2} show that minimal flat $3$-tori in spheres may be quartic, cubic, quadratic irrational (see Section~\ref{sec-3tori} for precise definitions). In fact, these examples exhaust all possible degrees of field extensions over $\mathbb{Q}$ occurring in minimal isometric immersions of flat $3$-tori into spheres.   
{\begin{newthe}\label{thm-2}
   Let $T^n=\mathbb{R}^n/\Lambda_n$ be a flat $n$-torus, admitting a minimal immersion into some sphere $\mathbb{S}^m$. Denote by $K$ the field extension over $\mathbb{Q}$ generated by all entries of the Gram matrix of $\Lambda_n$. Then the extension degree satisfies 
 $$[K:\mathbb{Q}]\leq (n-1)^{[{\frac{(n-1)(n+2)}{4}}]},$$
 where $[\cdot]$ represents the integer part  of a real number. 
\end{newthe}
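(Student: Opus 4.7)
The plan is to reinterpret the existence of a minimal isometric immersion as a polynomial system with rational coefficients on the entries of the Gram matrix, and then bound $[K:\mathbb{Q}]$ by a B\'ezout-type count.

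First, by the variational characterization (Theorem~\ref{thm-vari}), the minimal immersion $T^n \to \mathbb{S}^m$ corresponds to a finite set $Y\subset \mathbb{Z}^n$ (the integer coordinates, in the basis of $\Lambda_n^*$ dual to a fixed $\mathbb{Z}$-basis of $\Lambda_n$, of the frequencies $\alpha\in\Lambda_n^*$ used in the immersion) together with positive weights $\{a_y^2\}_{y\in Y}$ satisfying
\[ y^{T} Q y = 1 \ (\forall\, y\in Y), \qquad \sum_{y\in Y} a_y^{2}\, y y^{T} = \kappa\, Q^{-1}, \]
for some $\kappa>0$, where $Q$ denotes the Gram matrix of $\Lambda_n^*$. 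Since $Q^{-1}$ is the Gram matrix of $\Lambda_n$, one has $K = \mathbb{Q}(Q)$, so it suffices to bound $[\mathbb{Q}(Q):\mathbb{Q}]$.

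Next, I would set up the algebraic geometry. Write $W := \mathrm{span}\{y y^{T} : y\in Y\}\subset \mathrm{Sym}_n(\mathbb{R})$ and $d := \dim W$. The first batch $y^{T} Q y = 1$ is $\mathbb{Q}$-linear in $Q$ and cuts out a $\mathbb{Q}$-defined affine subspace $V\subset \mathrm{Sym}_n$ of dimension $n(n+1)/2 - d$. Eliminating the $a_y$ from the second condition yields the equivalent statement $Q^{-1}\in W$; via the cofactor identity $Q^{-1} = \mathrm{cof}(Q)/\det(Q)$ and the fact that $W$ is $\mathbb{Q}$-linear, this translates into a system of exactly $n(n+1)/2 - d$ polynomial equations of degree $n - 1$ in $Q$, all with rational coefficients. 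A B\'ezout-type count then bounds the number of isolated $\overline{\mathbb{Q}}$-solutions of this square system on $V$ by $(n-1)^{n(n+1)/2 - d}$, and hence $[\mathbb{Q}(Q):\mathbb{Q}]\leq (n-1)^{n(n+1)/2 - d}$.

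The final and most delicate step is to force $d \geq n(n+1)/2 - \lfloor (n-1)(n+2)/4\rfloor$, which converts the B\'ezout bound into the stated exponent. Without loss of generality I would first enlarge $Y$ to the full equal-norm layer $\{y\in\mathbb{Z}^n : y^{T} Q y = 1\}$ of $(\mathbb{Z}^n, Q)$; this is permissible since the immersion conditions continue to hold (new vectors get zero weight) and it can only increase $d$. The main obstacle is then a lattice-theoretic lemma asserting that any such equal-norm layer supporting a strictly positive second-moment decomposition $\sum a_y^{2}\, y y^{T} = \kappa Q^{-1}$ must span a subspace of $\mathrm{Sym}_n(\mathbb{R})$ of dimension at least $\lceil (n^{2}+n+2)/4\rceil$. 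I expect this to follow by combining the $\pm y$ symmetry of the layer with a refinement, in the spirit of Vorono\"i \cite{Voronoi}, of the classical inequalities for eutactic lattices, supplemented by a separate rigidity argument handling small-$d$ strata (such as $d = n$, realized by the Clifford torus) which independently forces $Q$ to be rational and thus trivially satisfies the bound.
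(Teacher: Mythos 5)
Your first two steps are sound and essentially reproduce one half of the paper's argument: eliminating the weights turns the eutaxy condition into $Q^{-1}\in W=\mathrm{Span}\{yy^t\}$, which together with the rational linear slice $\{y^tQy=1\}$ gives a square system of $s:=\frac{n(n+1)}{2}-d$ equations of degree $n-1$ with rational coefficients, and an arithmetic B\'ezout bound (this is exactly the role of Lemma~\ref{lem-degree}, via Fulton's Proposition 8.4, in the paper's Theorem~\ref{thm-degree}) yields $[K:\mathbb{Q}]\le (n-1)^{s}$. The genuine gap is your final step. The claimed lattice-theoretic lemma -- that any equal-norm layer carrying a positive decomposition $\sum a_y^2\,yy^t=\kappa Q^{-1}$ must satisfy $d\ge\lceil (n^2+n+2)/4\rceil$ -- is false, and the fallback "small $d$ forces $Q$ rational" fails as well. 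For instance, take the quartic irrational minimal flat $3$-torus of Example~\ref{ex-rank4-2} (where the norm-one layer consists of exactly four vectors up to sign, spanning a $4$-dimensional subspace of $\mathrm{Sym}_3$) and form its minimal product with a circle, as the paper itself does with Clifford factors in Section~\ref{sec-defor}. The result is a minimal isometric immersion of a flat $4$-torus whose Gram matrix is block-diagonal and quartic irrational, while its full equal-norm layer spans only a $5$-dimensional subspace of $\mathrm{Sym}_4$; here $\lceil (n^2+n+2)/4\rceil=6$, so your lemma fails, your rigidity fallback fails, and your B\'ezout exponent becomes $s=5>4=[\frac{(n-1)(n+2)}{4}]$. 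In general nothing prevents $d$ from taking any value in $[n,\frac{n(n+1)}{2}]$ with $Q$ irrational in the intermediate range, so the theorem cannot be obtained from the $(n-1)^s$ bound alone.

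What is missing is a second, complementary estimate that is strong precisely when $d$ is small. The paper obtains it by working on the dual side of the variational characterization: after the deformation theorem (Theorem~\ref{thm-deformation}), one may assume $\frac{Q^{-1}}{n}$ lies in the interior of a simplex spanned by at most $d$ matrices $Y_iY_i^t$, and criticality of the determinant on this $(d-1)$-parameter family gives $d-1$ polynomial equations of degree $n-1$ with rational coefficients, hence $[K:\mathbb{Q}]\le (n-1)^{d-1}$ by the same B\'ezout lemma. Combining the two bounds, $[K:\mathbb{Q}]\le (n-1)^{\min\{d-1,\,s\}}$, and maximizing $\min\{d-1,\frac{n(n+1)}{2}-d\}$ over $d$ gives exactly the exponent $[\frac{(n-1)(n+2)}{4}]$. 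So rather than excluding small $d$ (which is impossible), you need this dual bound; with it, your construction of the degree-$(n-1)$ system and the B\'ezout count would complete the proof along the same lines as the paper.
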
}

{For the $3$-dimensional case, the upper bound of extension degree established in this theorem equals $4$ and is optimal. It turns out that the irrational minimal flat $3$-tori have rigidity in some sense. We show that they must be homogeneous and when $[K:\mathbb{Q}]$ is equal to $3$ and $4$, the minimal isometric immersion of such $3$-tori is unique. We also construct examples of non-homogeneous minimal flat rational $3$-tori.} 

We further investigate the minimal target dimension for minimal isometric immersions of flat $n$-tori, and obtain the following deformation theorem.   
\begin{newthe}\label{thm-3}
    Let $x: T^n\triangleq \mathbb{R}^n/\Lambda_n\rightarrow \mathbb{S}^m$ be a minimal flat $n$-torus. Then $x$ can be deformed by a smooth homotopy of minimal isometric immersions into a homogeneous immersion in $\mathbb{S}^{p}\subset \mathbb{S}^m$ with $p<n^2+n$.  
\end{newthe}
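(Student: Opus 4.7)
My plan combines a Fourier normal form for $x$ with a Carath\'{e}odory-type reduction on the isometric constraint, followed by an explicit two-stage homotopy joining $x$ to a homogeneous endpoint living in a lower-dimensional subsphere. By Takahashi's theorem, minimality of $x$ forces every coordinate to be a $\Delta_{T^n}$-eigenfunction of one common eigenvalue, so the frequencies $\pm k_1,\dots,\pm k_N \in \Lambda_n^*$ entering the Fourier expansion of $x$ all lie on a single sphere in $\mathbb{R}^n$. Writing
\[
x(u) \;=\; \sum_{j=1}^N \bigl( z_j\, e^{2\pi i\langle k_j,u\rangle} + \bar z_j\, e^{-2\pi i\langle k_j,u\rangle}\bigr),\qquad z_j \in \mathbb{C}^{m+1},
\]
and expanding $|x|^2\equiv 1$ and $\partial_\ell x\cdot\partial_p x = \delta_{\ell p}$ mode by mode, the constant-term contributions yield
\[
\sum_{j=1}^N |z_j|^2 \;=\; \tfrac{1}{2},\qquad 8\pi^2\sum_{j=1}^N |z_j|^2\, k_j k_j^{\top} \;=\; I_n,
\]
while the oscillating modes produce a family of bilinear vanishing relations of the form $\sum z_j\!\cdot\! z_k=0$ or $\sum z_j\!\cdot\!\bar z_k=0$, summed over index pairs with a fixed frequency sum or difference. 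These relations become automatic precisely when the pairs $\{z_j,\bar z_j\}$ span pairwise orthogonal real $2$-planes in $\mathbb{R}^{m+1}$, which is the meaning of ``homogeneous'' here.

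Next I would invoke Carath\'{e}odory's theorem for the convex cone in $\mathrm{Sym}(n,\mathbb{R})$ generated by $\{k_j k_j^{\top}\}$. Since $\dim \mathrm{Sym}(n,\mathbb{R})=n(n+1)/2$, the matrix identity above produces weights $a_j^{\star}\ge 0$ supported on an index set $J$ with $|J|\le n(n+1)/2$ satisfying $8\pi^2\sum_{j\in J} a_j^{\star} k_j k_j^{\top}=I_n$ and $\sum_{j\in J} a_j^{\star}=\tfrac{1}{2}$. The homogeneous immersion
\[
x^{\star}(u)\;=\;\bigl(\sqrt{2a_j^{\star}}\,e^{2\pi i\langle k_j,u\rangle}\bigr)_{j\in J}
\]
then realises $T^n$ minimally and isometrically as a subset of the round $\mathbb{S}^{2|J|-1}\subset \mathbb{C}^{|J|}\cong\mathbb{R}^{2|J|}$, and the target-dimension bound $p=2|J|-1\le n(n+1)-1<n^2+n$ follows.

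I would then construct the homotopy $x_t$ in two stages inside $\mathbb{R}^{m+1}$. In stage one, a continuous path in $O(m+1)$ rotates the ambient Euclidean space so that the real $2$-planes $\mathrm{Span}_{\mathbb{R}}\{z_j,\bar z_j\}$ are made pairwise orthogonal while $|z_j|^2$ is kept fixed; the two constant-term identities persist and the bilinear vanishing conditions become trivial at the end of stage one. In stage two, since the blocks are now orthogonal, rescaling them by $\sqrt{a_j(t)/|z_j|^2}$ where $a_j(t)=(1-t)|z_j|^2+t\,a_j^{\star}$ gives a straight-line interpolation of squared amplitudes that stays in the minimal-isometric-immersion locus (both identities are affine in the $a_j$'s, and the vanishing conditions are trivial for orthogonal blocks). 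At $t=1$ only $|J|$ pairs survive, placing the image in a totally geodesic $\mathbb{S}^{p}\subset \mathbb{S}^m$; minimality is preserved throughout since every coordinate remains a $\lambda$-eigenfunction.

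The main obstacle is stage one: a priori the real $2$-planes $\mathrm{Span}_{\mathbb{R}}\{z_j,\bar z_j\}$ are genuinely entangled in $\mathbb{R}^{m+1}$, and one must produce a continuous $O(m+1)$-family that untangles them into pairwise orthogonality while keeping \emph{all} bilinear vanishing conditions valid and the immersion isometric along the way. I would approach this by successive pairwise disentanglements using one-parameter subgroups of $O(m+1)$ acting nontrivially only on two offending $2$-planes at a time, and by invoking connectedness of the real algebraic variety of admissible $(z_1,\dots,z_N)$-tuples once the ambient dimension is large enough to host pairwise orthogonality; ruling out forced collisions or degenerations of $2$-planes during the deformation is the delicate part.
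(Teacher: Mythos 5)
Your Fourier normal form and the Carath\'eodory reduction reproduce the paper's skeleton: the constant-mode identities are exactly the condition \eqref{eq-euta} (equivalently \eqref{eq:flat}), and extracting a subset $J$ of at most $\frac{n(n+1)}{2}$ frequencies whose weighted sum of $k_jk_j^{t}$ still represents $\frac{1}{n}Q^{-1}$ (the paper does this by decomposing the polytope $C_Y$ into simplices) yields the homogeneous immersion in $\mathbb{S}^{p}$ with $p\le n^2+n-1$, so your dimension count is the right one. The genuine gap is your stage one. A path $R_t\in O(m+1)$ applied to the immersion replaces every coefficient vector $z_j$ by $R_tz_j$ and therefore preserves all products $z_j\cdot z_k$ and $z_j\cdot\bar z_k$; in particular it preserves the mutual angles of the real $2$-planes $\mathrm{Span}_{\mathbb{R}}\{\mathrm{Re}\,z_j,\mathrm{Im}\,z_j\}$. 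So no ambient rotation, and no composition of one-parameter ambient rotations "acting on two offending planes at a time", can turn entangled planes into pairwise orthogonal ones: target isometries only change the immersion by congruence, while homogeneity is a property of the Gram matrix $AA^t$. If instead you mean to rotate the blocks separately, $z_j\mapsto R^{(j)}z_j$ with different $R^{(j)}$, you are genuinely changing $A$, and then every oscillating-mode bilinear relation must be re-verified along the path — exactly the "delicate part" you leave unresolved; moreover pairwise orthogonality of $N$ planes requires ambient dimension $2N$, which may exceed $m+1$, so the untangling cannot in general happen inside the original target. Your connectedness appeal for the variety of admissible tuples is not established and is not obviously true at the level of the $(z_j)$.

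The paper closes this step by deforming $AA^t$ rather than the tuple $(z_j)$, where everything is linear: \eqref{eq-euta} involves only the diagonal blocks $a_rI_2$, while \eqref{eq-eigen1}, \eqref{eq-eigen2}, \eqref{eq-iso1}, \eqref{eq-iso2} form a homogeneous linear system in the off-diagonal blocks. Hence $t\mapsto(1-t)\,AA^t+t\,D$, with $D$ the block-diagonal part of $AA^t$, stays in the solution set, remains positive semi-definite (each block of a PSD matrix is PSD, and the set of PSD solutions $\mathcal{M}(T^n)$ is convex), and taking square roots gives a smooth homotopy of minimal isometric immersions ending at a homogeneous one — this is your stage one, obtained for free from linearity and convexity. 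Once homogeneity is reached, your stage two (straight-line interpolation of the squared amplitudes toward the Carath\'eodory weights $a_j^{\star}$) coincides with the paper's final step and is correct, since \eqref{eq-euta} is affine in the amplitudes and the cross-relations are vacuous for diagonal $AA^t$. So the proposal is salvageable, but you must replace the rotation argument by a deformation of the coefficient Gram matrix.
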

Applying this theorem to the $2$-dimensional case, we see that the minimal target dimension is $5$ for all flat $2$-tori admitting minimal isometric immersions into spheres, except for the Clifford $2$-torus and its coverings. 

The paper is organized as follows. {In Section~\ref{sec3}, we begin by reviewing the spectral theory of flat $n$-tori and subsequently introduce our variational characterizations of their minimal isometric immersions. Section~\ref{sec-rational} focuses on minimal isometric immersions of rational flat $n$-tori, including the proof of Theorem~\ref{thm-1}. In Section~\ref{sec-3tori}, we first construct several examples of irrational minimal flat $3$-tori and establish Theorem~\ref{thm-2} in the case of $n=3$. {Then we address the rigidity and homogeneity of irrational minimal flat $3$-tori.} We also present examples of non-homogeneous, rational minimal flat $3$-tori. 
 Finally,  %
 we establish the aforementioned deformation theorem, namely Theorem~\ref{thm-3}, and {give a proof to Theorem~\ref{thm-2} for general $n$} 
 in Section~\ref{sec-deform}.
 } 

\section{Priliminaries}
\label{sec3}
In this section, we will first recall the basic theory of flat $n$-tori. Then, we will introduce the setup we developed for minimal isometric immersions of flat 
 $n$-tori, as presented in \cite{LWX}, with particular emphasis on a variational characterization. 
\subsection{Flat tori and lattices}

A flat torus $T^n$ of dimension $n$ can be described as
$$T^n=\mathbb{R}^n/{\Lambda_n},$$
where $\Lambda_n$ is a lattice of rank $n$ in $\mathbb{R}^n$. Set $L_n$ to be a generator matrix of $\Lambda_n$, which means $\Lambda_n$ can be generated by row vectors of $L_n$. The  Gram matrix of $\Lambda_n$ is then defined as 
\begin{equation}
    \label{eq-gram}
    Q:=L_n L_n^t.
\end{equation}

Two tori $T^n=\mathbb{R}^n/{\Lambda_n}$ and $\widetilde{T}^n=\mathbb{R}^n/{\widetilde{\Lambda}_n}$ are isometric if and only if $\Lambda_n$ and $\widetilde{\Lambda}_n$ are isometric, i.e., there exists an orthogonal matrix $O$ and an unimodular matrix $U\in SL(n,\mathbb{Z})$, such that $L_n=U\,\widetilde{L}_n\, O$, where $L_n$  (w.r.t. $\widetilde{L}_n$) is a generator matrix 
of lattice $\Lambda_n$ (w.r.t. $\widetilde{\Lambda}_n$).
It follows that the moduli space of flat $n$-tori  is
$$SL(n,\mathbb{Z})\setminus GL(n,\mathbb{R})\,/\,O(n).$$

The dual lattice of $\Lambda_n$ is defined as the lattice $\Lambda_n^{*}$ generated by row vectors of the matrix ${L}_n^*=(L_n^{-1})^t$.
It is well known that the spectrum of $T^n=\mathbb{R}^n/{\Lambda_n}$ is given by 
$$\mathrm{Spec}(T^n)=\Big{\{}4\pi^2|\xi|^2\,\Big{|}\,\xi\in \Lambda_n^*\Big{\}},$$
and $e^{2\pi\langle\xi,u \rangle i}$ is an eigenfunction corresponding to the eigenvalue $4\pi^2|\xi|^2$, where
$u=(u_1,u_2,\cdots,u_n)$
is the 
coordinates of $\mathbb{R}^n$, such that the flat metric on $\mathbb{R}^n$ ($T^n$) can be expressed as $du_1^2+d u_2^2+\cdots+d u_n^2$. 
\subsection{Algebraic characterizations of minimal isometric immersions of flat $n$-tori}

Let $T^n=\mathbb{R}^n/{\Lambda_n}$ be a flat $n$-torus, which admits a minimal isometric immersion in some sphere. Then $n$ is an eigenvalue of $T^n$, whose eigenspace is assumed to be of dimension $2N$. We choose the coordinates $u=(u_1,u_2,\cdots,u_n)$ on $\mathbb{R}^n$ ($T^n$) such that the induced metric can be expressed as 
$$\frac{4\pi^2}{n}(du_1^2+du_2^2+\cdots+du_n^2).$$
It follows that there are exactly $N$ distinct lattice vectors  (up to $\pm1$) having the length $1$ 
in the dual lattice $\Lambda_n^*$. We denote them by
$$\xi_1,\xi_2,\cdots,\xi_N.$$
By the theorem of Takahashi \cite{Takahashi}, a minimal isometric immersion of $T^n$ in spheres can be determined by some $2N\times 2N$ matrix $A$ as follows:
\beq\label{eq-X}
x=\bgm\Q_1&\Q_2&\cdots&\Q_N\edm A:T^n=\mathbb{R}^n/{\Lambda_n} \longrightarrow  \mathbb{S}^{2N-1},
\eeq
where $\Q_r=\bgm\cos\q_r&\sin\q_r\edm,$ $\q_r=、2\pi\langle\xi_r,u\rangle$ for $~1\leq r\leq N$. 
As in \cite{LWX}, we write 
$$AA^t=\bgm A_{11}& A_{12}&\cdots& A_{1N}\\
 A_{21}& A_{22}&\cdots& A_{2N}\\
\vdots&\vdots&\ddots&\vdots\\
 A_{N1}& A_{N2}&\cdots& A_{NN}\edm,~~~ A_{rs}=\bgm A_{rs}^{11}& A_{rs}^{12}\\
 A_{rs}^{21}& A_{rs}^{22}\edm,$$
and define $\mathcal{E}=\{\xi_r\pm\xi_s,1\leq r<s\leq n\}$. We call the set of pairs  
 \[\{(\xi_{r_1},\xi_{s_1}), \cdots,(\xi_{r_p},\xi_{s_p}), (\xi_{r_{p+1}}, -\xi_{s_{p+1}}), \cdots, (\xi_{r_q}, -\xi_{s_q})\}\]
 $\eta$-set if
    $$\xi_{r_1}+\xi_{s_1}=\cdots=\xi_{r_p}+\xi_{s_p}=\xi_{r_{p+1}}-\xi_{s_{p+1}}=\cdots=\xi_{r_q}-\xi_{s_q}=\eta$$
    exhaust all the possible realization of $\eta\in \mathcal{E}$.
    It is straightforward to verify that for a given $\eta$-set, the vectors  $\pm\xi_{r_1},\cdots,\pm\xi_{r_q}, \pm\xi_{s_1},\cdots,\pm\xi_{s_q}$ involved in it  are distinct with each other. 

It follows from the proof of Lemma~2.2 in \cite{LWX} that 
$$ A_{rr}=\bgm a_r&\\&a_r\edm,$$ and the condition $|x|=1$ is equivalent to 
\begin{align}
    \sum_{\xi_r+\xi_s=\eta}(A_{rs}^{11}-A_{rs}^{22})+\sum_{\xi_r-\xi_s=\eta}(A_{rs}^{11}+A_{rs}^{22})=0, ~~~\forall \eta\in \mathcal{E}, \label{eq-eigen1}\\
    \sum_{\xi_r+\xi_s=\eta}(A_{rs}^{12}+A_{rs}^{21})+\sum_{\xi_r-\xi_s=\eta}(A_{rs}^{12}-A_{rs}^{21})=0, ~~~\forall \eta\in \mathcal{E}. \label{eq-eigen2}
\end{align}
Furthermore, the proof of Lemma~2.3 in \cite{LWX} implies that $x$ is an isometric immersion if and only if  
\begin{equation}\label{eq-euta}
    \sum_{j=1}^{N}a_r \xi_r \xi_r^t=\frac{I_n}{n}, 
\end{equation}
\vskip -0.5cm
\begin{align}
    \sum_{\xi_r+\xi_s=\eta}(A_{rs}^{11}-A_{rs}^{22})(\xi_r\xi_s^t+\xi_s\xi_r^t)+\sum_{\xi_r-\xi_s=\eta}(A_{rs}^{11}+A_{rs}^{22})(\xi_r\xi_s^t+\xi_s\xi_r^t)=O, ~~~\forall \eta\in \mathcal{E}, \label{eq-iso1}\\
    \sum_{\xi_r+\xi_s=\eta}(A_{rs}^{12}+A_{rs}^{21})(\xi_r\xi_s^t+\xi_s\xi_r^t)+\sum_{\xi_r-\xi_s=\eta}(A_{rs}^{12}-A_{rs}^{21})(\xi_r\xi_s^t+\xi_s\xi_r^t)=O, ~~~\forall \eta\in \mathcal{E}, \label{eq-iso2}
\end{align}
where $O$ denotes the $n\times n$ null matrix. 

Note that for a given $n$-torus $T^n=\mathbb{R}^n/\Lambda_n$,  
a solution of \eqref{eq-eigen1}$\sim$\eqref{eq-iso2} will provide a matrix of $2N\times 2N$. If it is positive semi-definite, then its square root gives us a minimal isometric immersion of $T^n$ in $\mathbb{S}^{2N-1}$ as in \eqref{eq-X}. In fact, solutions of \eqref{eq-eigen1}$\sim$\eqref{eq-iso2} parameterize the moduli space $\mathcal{M}(T^n)$ of all minimal isometric immersions of $T^n$ in $\mathbb{S}^{2N-1}$, which is obvious a convex set in $\mathrm{Sym}_{2N}$ if it is nonempty. Similar description for the minimal isometric immersions of isotropic irreducible Riemannian homogeneous space into spheres has been obtained (see do Carmo-Wallach \cite{doCarmo-Wallach}, Li \cite{Li}, Wang-Ziller \cite{Wang-Ziller}, Toth \cite{Toth} and references theirin), so has been for the eigenmaps of $n$-tori into spheres (see Park-Urakawa \cite{Park-Urakawa}). However, the difficulty here is that for general $n$-tori, the equations \eqref{eq-eigen1}$\sim$\eqref{eq-iso2} may have no solutions, as shown in the case of dimension $2$ by Bryant \cite{Bryant}. 

{It is easy to see that \eqref{eq-eigen1},\eqref{eq-eigen2},\eqref{eq-iso1} and \eqref{eq-iso2} form a homogeneous linear system. So any non-trivial solution 
can be deformed continuously into the trivial solution. This indicates that the key equation is \eqref{eq-euta}, which characterizes the existence of homogeneous minimal isometric immersions of $T^n$ into $\mathbb{S}^{2N-1}$. This yields the following conclusion. 
\begin{proposition}\label{prop-homo}
A flat $n$-torus can be minimally and isometrically immersed into spheres if and only if it admits a homogeneous, minimal isometric  immersion into some sphere.   
\end{proposition}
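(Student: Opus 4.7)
The backward direction is immediate: a homogeneous minimal isometric immersion is in particular a minimal isometric immersion. So the entire content lies in the forward direction, and my plan is to exploit the structural remark just made in the text, namely that among the defining equations \eqref{eq-eigen1}--\eqref{eq-iso2} and \eqref{eq-euta}, only \eqref{eq-euta} constrains the diagonal coefficients $a_r$, while the remaining four equations form a homogeneous linear system in the off-diagonal block entries $A_{rs}^{ij}$ ($r\ne s$). The idea is therefore to start with an arbitrary minimal isometric immersion $x$, extract from its data $AA^t$ only the diagonal blocks, and show that the diagonal piece alone still produces a valid immersion which is manifestly homogeneous.

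Concretely, I would proceed as follows. First, given a minimal isometric immersion $x$ as in \eqref{eq-X}, the Gram matrix $M:=AA^t$ is positive semi-definite and satisfies \eqref{eq-eigen1}--\eqref{eq-iso2} together with \eqref{eq-euta}, with diagonal blocks $A_{rr}=a_r I_2$. Consider the new Gram matrix $\widetilde M:=\mathrm{diag}(a_1 I_2,\ldots,a_N I_2)$, obtained from $M$ by zeroing out all off-diagonal blocks. Since $M\succeq 0$, its diagonal entries are nonnegative, hence each $a_r\ge 0$ and $\widetilde M\succeq 0$. Equations \eqref{eq-eigen1}, \eqref{eq-eigen2}, \eqref{eq-iso1}, \eqref{eq-iso2} are trivially satisfied by $\widetilde M$ (every sum reduces to $0=0$), while the crucial isometry equation \eqref{eq-euta} depends only on the $a_r$'s and is therefore inherited from $M$. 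Taking a square root $\widetilde A$ of $\widetilde M$, the map
\[
\tilde x(u)=\bigl(\sqrt{a_1}\cos\theta_1,\sqrt{a_1}\sin\theta_1,\ldots,\sqrt{a_N}\cos\theta_N,\sqrt{a_N}\sin\theta_N\bigr),\qquad \theta_r=2\pi\langle\xi_r,u\rangle,
\]
is therefore a minimal isometric immersion of $T^n$ into some $\mathbb{S}^{p}\subset \mathbb{S}^{2N-1}$, where $p$ depends on how many $a_r$'s are nonzero.

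It remains to verify that $\tilde x$ is homogeneous. For any $v\in\mathbb{R}^n$, the translation $u\mapsto u+v$ acts on each coordinate pair $(\sqrt{a_r}\cos\theta_r,\sqrt{a_r}\sin\theta_r)$ as the rotation of $\mathbb{R}^2$ by angle $2\pi\langle\xi_r,v\rangle$. These rotations commute and act as an isometry of $\mathbb{S}^{2N-1}$ preserving $\tilde x(T^n)$; this produces a transitive action of $T^n$ on its image by ambient isometries, so $\tilde x$ is homogeneous in the sense used throughout the paper. The only nontrivial point in the argument is the structural observation separating \eqref{eq-euta} from the other four equations; once that is clear, the rest is essentially formal, and I do not expect any serious obstacle beyond carefully recording that $\widetilde M$ inherits positive semi-definiteness and that the diagonal immersion is genuinely homogeneous.
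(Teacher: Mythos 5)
Your proposal is correct and follows essentially the same route as the paper: the paper also observes that \eqref{eq-eigen1}--\eqref{eq-iso2} form a homogeneous linear system in the off-diagonal blocks (so any solution deforms to the trivial one), leaving \eqref{eq-euta} as the sole condition, which is exactly your diagonal immersion. Your explicit verification that the diagonal Gram matrix stays positive semi-definite and that the resulting map is homogeneous simply spells out what the paper leaves implicit.
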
}

\subsection{A variational characterization}
In \cite{LWX}, the solving of \eqref{eq-euta} is analyzed by us from a variational perspective. We provide a brief overview of it here.  Let $\{\eta_1,\eta_2,\cdots,\eta_n\}$ be a generator of the dual lattice $\Lambda_n^{*}$, and $Q$ be the Gram matrix of $\Lambda_n^*$ with respect to this generator. Then there exist integers $a_{j_k}$ such that
$$\xi_j=a_{j_1}\eta_1+a_{j_2}\eta_2+\cdots\cdots+a_{j_n}\eta_n,\quad 1\leq j\leq N.$$
Set
$$Y_j=(a_{j_1},a_{j_2},\cdots\cdots,a_{j_n})^t,\quad Y=(Y_1,\cdots,Y_N). 
\quad 1\leq j\leq N.$$ 
We call $\{Q, Y\}$ a {\em matrix data} of $x$. 

Let $\{\eta_1^*,\eta_2^*,\cdots,\eta_n^*\}$ be the dual frame of $\{\eta_1,\eta_2,\cdots,\eta_n\}$. It forms a generator of $\Lambda_n$. Let $\mathbf{u}=(\mathbf{u}_1, \mathbf{u}_2, \cdots, \mathbf{u}_n)$ be the coordinate of $\mathbb{R}^n$ with respect to the basis $\{\eta_1^*,\eta_2^*,\cdots,\eta_n^*\}$. Write it as a column vector. Then the function $\theta_i$ defined in \eqref{eq-X} has an explicit expression 
\begin{equation}\label{eq-YU}
\theta_i=2\pi\langle\xi_i,u\rangle=2\pi\langle (\eta_1,\cdots,\eta_n)Y_i,(\eta_1^*,\cdots,\eta_n^*)\mathbf{u}\rangle=2\pi\langle Y_i,\mathbf{u}\rangle. 
\end{equation}

In terms of the matrix data, the condition $|\xi_j|=1$ is equivalent to
\beq\label{eq-AQA}
Y_j^tQY_j=1,
\eeq
i.e.,
$Y_j$ lies on the hyper-ellipsoid $\mathcal{Q}$ determined by
$$\bgm
u_1\!&u_2\!&\cdots\!&u_n
\edm
Q\bgm
u_1\\u_2\\\vdots\\u_n
\edm=1.
$$
The equation \eqref{eq-euta} is equivalent to
\begin{equation}\label{eq:flat}
c_1^2 Y_1Y_1^t+c_2^2Y_2Y_2^t+\cdots\cdots+c_N^2Y_NY_N^t=\frac{1}{n}Q^{-1},
\end{equation}
i.e., $\frac{Q^{-1}}{n}$ lies in the convex hull spanned by $\{Y_jY_j^t\}_{j=1}^N$. 
We consider the space $\mathrm{Sym}_n$ of $n\times n$ symmetric matrices over $\mathbb{R}$, and equip it with the inner product:
$$\langle S_1, S_2\rangle=
\operatorname{tr}(S_1S_2),\quad S_1, S_2\in \mathrm{Sym}_n.$$
Let $\Sigma$ (resp. $\Sigma_+$) be the set of semi-positive (resp. positive) definite matrices, which forms a close (resp. open) cone in $\mathrm{Sym}_n$. 
Given a subset $X\subset\mathbb{Z}^n$, we denote by $C_X$ the convex hull spanned by $AA^t$ for all $A\in X$, and define  
$$W_X\triangleq\{M\,|\, M\in \Sigma_+, \langle M, AA^t\rangle=1\},$$ 
whose geometric meaning is the set of all hyper-ellipsoids passing through every point in $X$. 
With the notations established, we present the variational characterization obtained in \cite{LWX}. 
\begin{theorem}\label{thm-vari}
Let $x: T^n=\mathbb{R}^n/\Lambda_n\rightarrow \mathbb{S}^{m}$ be a minimal flat torus, and $\{Q, Y\}$ be the matrix data of $x$. Then $\frac{{Q}^{-1}}{n}$ is a maximum point of the determinant function restricted on $C_Y$, and $Q$ is a maximum point of the determinant function restricted on $W_Y$. 

Conversely, given a finite set $X\subset \mathbb{Z}^n$ such that $C_X\cap \Sigma_+\neq \varnothing$, if $P\in \overset{\circ}{C_X}$ is a critical point of the determinant function restricted on $C_X$, then the torus $\mathbb{R}^n/\Lambda_n$ determined by $nP$ (as the Gram matrix of $\Lambda_n$) admits a minimal isometric immersion in some sphere. 
\end{theorem}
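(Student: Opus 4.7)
The plan is to interpret equations \eqref{eq-AQA} and \eqref{eq:flat} satisfied by the matrix data $\{Q,Y\}$ as the Lagrange-multiplier conditions for the determinant function restricted to $C_Y$ and $W_Y$, and then promote criticality to maximality via the strict log-concavity of $\det$ on the open cone $\Sigma_+$. The key variational formula is $\nabla \log\det(P) = P^{-1}$ relative to the inner product $\langle S_1,S_2\rangle = \operatorname{tr}(S_1 S_2)$.

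For the forward direction, parametrize $C_Y$ by the simplex $\{(t_j) : t_j\ge 0,\ \sum_j t_j = 1\}$ and apply Lagrange multipliers to $\log\det\bigl(\sum_j t_j Y_j Y_j^t\bigr)$: an interior critical point $P$ is characterized by $Y_j^t P^{-1} Y_j = \mu$ for a common scalar $\mu$, which after pairing with $P = \sum_j t_j Y_j Y_j^t$ forces $\mu = \operatorname{tr}(P^{-1}P) = n$. The matrix data of a minimal immersion satisfies $Y_j^t Q Y_j = 1$ by \eqref{eq-AQA} and $Q^{-1}/n = \sum_j c_j^2 Y_j Y_j^t \in C_Y$ by \eqref{eq:flat}, so $P = Q^{-1}/n$ meets the critical-point condition $Y_j^t(nQ)Y_j = n$; strict concavity of $\log\det$ on $C_Y\cap \Sigma_+$ then identifies this unique critical point as the global maximum of $\det$ on $C_Y$. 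The analogous argument for $W_Y$ uses Lagrange multipliers on the constraints $\langle M, Y_j Y_j^t\rangle = 1$, yielding $M^{-1} = \sum_j \lambda_j Y_j Y_j^t$; taking $M = Q$ and $\lambda_j = n c_j^2$ recovers \eqref{eq:flat}, so $Q$ maximizes $\det$ on $W_Y$.

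For the converse, enumerate $X = \{Y_1,\ldots,Y_N\}$ and let $P = \sum_j t_j Y_j Y_j^t \in \overset{\circ}{C_X}$ be a critical point, so all $t_j > 0$ and $Y_j^t P^{-1} Y_j = n$ for every $j$ by the same computation. Setting $Q = P^{-1}/n$, we have $Q \in \Sigma_+$ with $Y_j^t Q Y_j = 1$; taking $nP = Q^{-1}$ as the Gram matrix of $\Lambda_n$ produces a flat torus in which the $\xi_j$ corresponding to $Y_j$ are unit vectors of $\Lambda_n^*$, and the identity $P = \sum_j t_j Y_j Y_j^t = Q^{-1}/n$ is precisely \eqref{eq:flat} with $c_j^2 = t_j$, hence \eqref{eq-euta} holds. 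Combined with the trivial choice $A_{rs} = 0$ $(r\ne s)$ for the homogeneous linear system \eqref{eq-eigen1}$\sim$\eqref{eq-iso2}, the resulting positive semi-definite matrix $AA^t = \operatorname{diag}(t_1, t_1, \ldots, t_N, t_N)$ has an explicit square root which, inserted into \eqref{eq-X}, realizes a homogeneous minimal isometric immersion of the torus, in accordance with Proposition~\ref{prop-homo}. The main subtlety, rather than a real obstacle, is keeping the normalization straight: the factor $1/n$ in $Q^{-1}/n$ is forced by the Lagrange multiplier value $\mu = n$ coming from $\operatorname{tr}(I_n) = n$.
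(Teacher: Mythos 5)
Your proposal is correct and follows essentially the same route as the paper's own argument: the gradient formula $\nabla\log\det(P)=P^{-1}$ turns \eqref{eq-AQA} and \eqref{eq:flat} into the first-order conditions on $C_Y$ and $W_Y$, strict log-concavity upgrades criticality to maximality, and in the converse the interior criticality forces $\langle P^{-1},Y_jY_j^t\rangle=n$, so that $Q=P^{-1}/n$ with $c_j^2=t_j$ yields the homogeneous minimal isometric immersion. Your use of Lagrange multipliers on the simplex parametrization is just a repackaging of the paper's directional-derivative computation along segments $P+t(Y_jY_j^t-Y_1Y_1^t)$, so there is no substantive difference.
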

\begin{rem}
    For any given integer set $X$, the critical point of the determinant function restricted on $C_X$ is determined by a system of algebraic equations with integer  coefficients, which implies flat $n$-tori that admit minimal isometric immersions into spheres have the property that the entries of their Gram matrices are algebraic integers. Thus, a generic flat $n$-torus cannot be minimally, isometrically immersed in  any spheres. When $n=2$, the aforementioned algebraic equations are all linear. This implies, as asserted by Bryant in \cite{Bryant}, that only rational flat $2$-tori could admit  minimal isometric immersions into spheres. \end{rem}
\begin{rem}\label{rk-em}
    In terms of the matrix data $\{Q, Y\}$, it follows from \eqref{eq-YU} that a homogeneous minimal immersion $x$ is embedded if and only if the image of the map 
    \begin{equation}\label{eq-em}
    \begin{split}
         \mathcal{Y}: \Omega\triangleq\{&(\mathrm{u}_1,\mathrm{u}_2,\cdots,\mathrm{u}_n)\;|\; |\mathrm{u}_i|<1\} \longrightarrow  \mathbb{R}^N \\
         &~~~~~~~(\mathrm{u}_1,\mathrm{u}_2,\cdots,\mathrm{u}_n)~~~~~~~～\mapsto ~~(\theta_1,\theta_2, \cdots, \theta_N)=(\mathrm{u}_1,\mathrm{u}_2,\cdots,\mathrm{u}_n)Y 
    \end{split}
    \end{equation} 
    intersects $\mathbb{Z}^N$ solely at the origin. When $N=n$, by the first Minkowski's  Convex Body Theorem \cite{Siegel}, this condition is equivalent to the determinant constraint 
 $\det(Y)=\pm 1$ (a similar characterization appeared in \cite{Park-Oh, Park-Urakawa}). 
    When $N> n$, if one of the $n\times n$ minors of Y is equal to $\pm 1$, then $x$ is embedded. In fact, without loss of generality, we may assume the minors given by 
    $\{Y_1, Y_2, \cdots, Y_n\}$ is equal to $\pm1$. Then the first Minkowski's Convex Body Theorem implies that origin is the unique point in $\Omega$ such that $\theta_1, \theta_2, \cdots,\theta_n$ are integers; hence the origin is the unique integer point in $\mathcal{Y}(\Omega)$.
\end{rem}
\section{Minimal isometric immersions of rational flat $n$-tori into spheres}\label{sec-rational}
A flat $n$-tori $\mathbb{R}^n/\Lambda_n$ is called rational, if up to a dilation, the Gram matrix 
of $\Lambda_n$ belongs to $GL(n,\mathbb{Q})$, i.e., its entries are all rational number. Note that this is equivalent to say that the Gram matrix 
of its dual lattice $\Lambda_n$ belongs to $GL(n,\mathbb{Q})$. Given a positive symmetric matrix $Q\in GL(n,\mathbb{R})$, we use the notation $\mathcal{Q}$ to denote the hyper-ellipsoid determined by $u^t\,Q\,u=1$. 
\begin{lemma}\label{lem-rational}
    Let $Q\in GL(n,\mathbb{Q})$ be a positive definite symmetric matrix.  Then up to a dilation, the rational points on the hyper-ellipsoid $\mathcal{Q}$ are dense. 
\end{lemma}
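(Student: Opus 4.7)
The strategy is the classical rational parameterization of a quadric (stereographic projection from a rational point), combined with a dilation chosen so that a rational point exists on the rescaled ellipsoid. The first paragraph reduces to the case where a rational point is available, and the second paragraph produces a dense family of rational points via lines with rational slope.

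First I would produce one rational point on a dilation of $\mathcal{Q}$. Pick any nonzero vector $v \in \mathbb{Q}^n$; since $Q \in GL(n,\mathbb{Q})$ is positive definite, the value $c \triangleq v^t Q v$ is a positive rational number. Setting $\lambda = 1/c \in \mathbb{Q}_{>0}$ and replacing $Q$ with $\lambda Q$ (which remains a rational, positive definite, symmetric matrix), the vector $v$ now satisfies $v^t (\lambda Q) v = 1$. So, up to this dilation, we may assume the ellipsoid $\mathcal{Q}$ carries at least one rational point $v_0$.

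Next I would apply stereographic projection from $v_0$. For an arbitrary nonzero $w \in \mathbb{Q}^n$, substitute $u = v_0 + t w$ into $u^t Q u = 1$. Using $v_0^t Q v_0 = 1$, the equation reduces to
\begin{equation*}
t\bigl(2\, v_0^t Q w + t\, w^t Q w\bigr) = 0,
\end{equation*}
whose nonzero root is $t = -2(v_0^t Q w)/(w^t Q w) \in \mathbb{Q}$ (well-defined since $Q$ is positive definite and $w \neq 0$). Hence the second intersection point $v_0 + t w$ of the line with $\mathcal{Q}$ is again rational. As $w$ runs over $\mathbb{Q}^n \setminus \{0\}$, modulo scaling we obtain a rational point on $\mathcal{Q}$ associated to every rational direction.

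Finally I would conclude density. The map $[w] \mapsto v_0 + tw$, defined on $\mathbb{R}P^{n-1}$ minus the hyperplane $\{[w] : v_0^t Q w = 0\}$, is the standard rational parameterization of $\mathcal{Q} \setminus \{v_0\}$; it is continuous and surjective onto $\mathcal{Q} \setminus \{v_0\}$ (and extends to the excluded hyperplane by mapping it to $v_0$ itself). Since rational directions are dense in $\mathbb{R}P^{n-1}$, their images form a dense subset of $\mathcal{Q}$. This proves that rational points on the dilated hyper-ellipsoid are dense. There is no real obstacle here: the only subtlety is the initial dilation, which is needed because a generic rational positive definite form need not represent $1$ rationally, but a trivial rescaling makes it do so.
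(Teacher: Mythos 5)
Your proof is correct and follows essentially the same route as the paper: both obtain a base point on (a dilation of) the ellipsoid that is rational and then use secant lines (stereographic projection) from that point with rational parameters to produce a dense set of rational points. The only cosmetic difference is that you rescale $Q$ by a rational factor at the outset so the base point is genuinely rational, whereas the paper keeps the irrational scaling factor $\lambda$ throughout, projects through a coordinate hyperplane, and applies the dilation only at the end.
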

\begin{proof}
    First, note that there exists a point $u_0$ on the hyper-ellipsoid $\mathcal{Q}$ and a nonzero real number $\lambda$ such that $\frac{u_0}{\lambda}$ is a rational point. In fact, one can consider a line passing through the origin with a direction determined by a rational vector. The intersection of this line with $\mathcal{Q}$ is nonempty. 

    Let $\Pi$ be a coordinate hyperplane that 
    not containing the point $u_0$. For any given point $u'\in \Pi$, the line passing through $u_0$ and $u'$ intersects the hyper-ellipsoid $\mathcal{Q}$ at a point $u$, which is equal to 
    $$u_0-2\frac{u_0 Q (u'-u_0)^t}{(u'-u_0) Q (u'-u_0)^t}\,(u'-u_0).$$
    Then the conclusion follows from the fact that the points set $\{u'\in \Pi \;|\; \frac{u'}{\lambda} \text{is~rational}\}$ is dense in $\Pi$. 
\end{proof}

\begin{lemma}\label{lem-rational1}
     Let $Q$ 
     be a positive definite symmetric matrix. Then there exist $N=\frac{n(n+1)}{2}$ points $\{Y_1, \cdots, Y_N\}$ on the hyper-ellipsoid $\mathcal{Q}$ 
     such that 
     $\{Y_1 Y_1^t, Y_2 Y_2^t, \cdots Y_N Y_N^t \}$ 
     spans the whole space $\mathrm{Sym}_n$. 
\end{lemma}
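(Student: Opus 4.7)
The plan is to construct the $N$ points explicitly by first choosing $N$ vectors in $\mathbb{R}^n$ whose outer products are known to form a basis of $\mathrm{Sym}_n$, and then rescaling each so that it lands on the hyper-ellipsoid $\mathcal{Q}$.

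First, I would take the $N=\frac{n(n+1)}{2}$ nonzero vectors
$$\{e_i\}_{1\le i\le n}\;\cup\;\{e_i+e_k\}_{1\le i<k\le n},$$
call them $v_1,\ldots,v_N$, and verify that the symmetric matrices $v_j v_j^t$ form a basis of $\mathrm{Sym}_n$. This is the standard fact that the rank-one cone $\{u u^t\mid u\in\mathbb{R}^n\}$ linearly generates $\mathrm{Sym}_n$: the products $e_i e_i^t$ give the diagonal elementary matrices $E_{ii}$, while
$$(e_i+e_k)(e_i+e_k)^t - E_{ii} - E_{kk} = E_{ik}+E_{ki}$$
yields the symmetric off-diagonal elementary matrices, so the $N$ matrices $\{v_j v_j^t\}$ exhaust a basis of $\mathrm{Sym}_n$.

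Next, since $Q$ is positive definite and each $v_j$ is nonzero, the scalar $v_j^t Q v_j$ is strictly positive, so I can set
$$Y_j\triangleq\frac{v_j}{\sqrt{v_j^t Q v_j}},\qquad 1\le j\le N.$$
By construction $Y_j^t Q Y_j=1$, hence $Y_j\in\mathcal{Q}$. Moreover $Y_j Y_j^t=(v_j^t Q v_j)^{-1}\,v_j v_j^t$ is a positive scalar multiple of $v_j v_j^t$, and since rescaling basis elements by nonzero constants preserves linear independence, $\{Y_j Y_j^t\}_{j=1}^N$ is still a basis of $\mathrm{Sym}_n$, which is exactly the claim.

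There is essentially no obstacle here: the whole content is the classical observation that rank-one symmetric matrices span $\mathrm{Sym}_n$, and the positivity of $Q$ is only used to make the rescaling from $v_j$ to $Y_j$ well defined. In particular, unlike Lemma~\ref{lem-rational} no rationality is involved, so no density argument is needed.
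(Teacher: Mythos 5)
Your proof is correct and follows essentially the same route as the paper, which also takes the rays directed by $e_j+e_k$ ($1\leq j\leq k\leq n$) and intersects them with the hyper-ellipsoid $\mathcal{Q}$; your write-up merely spells out the rescaling and the standard basis argument that the paper leaves implicit.
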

\begin{proof}
    Such points can be easily found by considering the intersection of rays directed by $\{e_j+e_k\;|\; 1\leq j\leq k\leq n\}$ with the hyper-ellipsoid $\mathcal{Q}$. 
\end{proof}

In geometry of number, for a given matrix (quadratic form) $Q\in \Sigma_+$ 
the Vorono\"i domain $\mathcal{V}(Q)$ is defined as  
\[\mathcal{V}(Q):=\hbox{ the cone spanned by } \{uu^t\;|\; u\in \mathrm{Min}(Q)\},\] where $\mathrm{Min}(Q)$ denotes the set of shortest vectors of the lattice determined by $Q$. $Q$ is called a perfect form if the rank of $\{uu^t\;|\; u\in \mathrm{Min}(Q)\}$ equals $\frac{n(n+1)}{2}$. In \cite{Voronoi}, Vorono\"i established the finiteness of $n$-dimensional perfect forms modulo $GL(n,\mathbb{Z})$-equivalence and derived the following result. 
\begin{lemma} \label{lem-Voronoi}
    The cone $\Sigma_+\subset \mathrm{Sym}_n$ of positive definite matrices can be covered by the Vorono\"i domains of the $n$-dimensional perfect forms, i.e., 
    $$\Sigma_+\subset \bigcup_{\begin{array}{c}Q_p\;\rm{is~perfect}\\ \lambda_1(Q_p)=1\end{array}}\mathcal{V}(Q_p),$$
    where $\lambda_1(Q_p)$ denotes the shortest length of vectors in the lattice determined by $Q_p$. 
\end{lemma}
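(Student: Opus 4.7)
The plan is to apply Vorono\"i's classical argument, viewing the perfect-form cones $\mathcal{V}(Q_p)$ as the top-dimensional cells of the normal fan of the Ryshkov polyhedron
\[\mathcal{R}=\{P\in\Sigma_+\mid u^t P u\geq 1\;\;\forall u\in\mathbb{Z}^n\setminus\{0\}\}.\]
First I would verify that $\mathcal{R}$ is a locally finite closed convex polyhedron: on any compact subset of $\Sigma_+$, Hermite's inequality $\lambda_1(P)^n\leq\gamma_n^n\det P$ bounds the number of lattice vectors $u$ with $u^tPu\leq 2$, so only finitely many of the defining half-spaces are active near any given point. By Vorono\"i's defining characterization of perfectness, the vertices of $\mathcal{R}$ are exactly the normalized perfect forms $Q_p$ (a boundary point is $0$-dimensional iff the active normals $\{uu^t:u\in\mathrm{Min}\}$ span the whole ambient space $\mathrm{Sym}_n$).

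Second, classical polyhedral duality identifies each Vorono\"i domain $\mathcal{V}(Q_p)$ with the outward normal cone of $\mathcal{R}$ at the vertex $Q_p$. Hence the family $\{\mathcal{V}(Q_p)\}_{Q_p}$ is precisely the top-dimensional part of the normal fan of $\mathcal{R}$. The crucial point is that this fan tessellates the ambient space $\mathrm{Sym}_n$ modulo the lineality subspace of $\mathcal{R}$; and the lineality here is trivial: $P+tS\in\mathcal{R}$ for all $t\in\mathbb{R}$ forces $u^tSu=0$ for every $u\in\mathbb{Z}^n$, hence $S=0$ by Zariski-density of $\mathbb{Z}^n$ in $\mathbb{R}^n$. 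Consequently the normal fan honestly covers all of $\mathrm{Sym}_n$.

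Third, for any $Q\in\Sigma_+$ (rescaled so $\lambda_1(Q)=1$ for bookkeeping), I would locate $Q$ in a specific cell of the fan by minimizing the linear functional $\langle Q,\,\cdot\,\rangle$ over $\mathcal{R}$. The minimum is attained on some face $F$ of $\mathcal{R}$: the recession cone of $\mathcal{R}$ equals $\Sigma$, so along any unbounded direction $S\in\Sigma\setminus\{0\}$ one has $\langle Q,S\rangle\geq\lambda_{\min}(Q)\,\mathrm{tr}(S)>0$ since $Q\succ 0$, which forces the level sets of $\langle Q,\,\cdot\,\rangle$ on $\mathcal{R}$ to be compact. Polyhedral duality then places $Q$ in the normal cone of $F$, which is contained in $\mathcal{V}(Q_p)$ for any perfect vertex $Q_p$ of $F$, proving the covering.

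The main obstacle is making the convex-polyhedral duality rigorous for the unbounded polyhedron $\mathcal{R}$ with infinitely many facets, since the usual normal-fan/face-cone correspondence is most often stated for polytopes. Once local finiteness and trivial lineality are secured — and both of these rest on Hermite's inequality — the standard machinery applies smoothly; so the technical core of the proof is really the careful verification of these two structural properties of $\mathcal{R}$.
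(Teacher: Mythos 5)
Your proposal is correct in substance, but it follows a genuinely different route from the paper, because the paper does not prove this lemma at all: it quotes it as Vorono\"i's classical theorem, citing his 1908 memoir for the finiteness of perfect forms modulo $GL(n,\mathbb{Z})$ and the covering of $\Sigma_+$ by the domains $\mathcal{V}(Q_p)$. You instead reconstruct a self-contained proof via the Ryshkov polyhedron $\mathcal{R}$, identifying normalized perfect forms with the vertices of $\mathcal{R}$, the domains $\mathcal{V}(Q_p)$ with the corresponding normal cones, and then locating an arbitrary $Q\in\Sigma_+$ by minimizing the linear functional $\langle Q,\cdot\rangle$ over $\mathcal{R}$; the recession-cone argument (recession cone $=\Sigma$, and $\langle Q,S\rangle>0$ for $S\in\Sigma\setminus\{0\}$ when $Q\succ0$) correctly yields attainment, and the Farkas-type duality ``the minimum is attained at $Q_p$ iff $Q\in\mathrm{cone}\{uu^t: u\in\mathrm{Min}(Q_p)\}$'' gives the covering. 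This is the standard modern treatment (as in Martinet's book, which the paper cites), and it buys an explicit, checkable argument where the paper only offers a reference. Two small repairs: first, your intermediate claim that the normal fan ``covers all of $\mathrm{Sym}_n$'' is false, since each $\mathcal{V}(Q_p)$ is generated by rank-one positive semidefinite matrices and so the union lies inside $\Sigma$; what is true, and what your third paragraph actually proves, is that every positive definite $Q$ lies in some normal cone, which is all the lemma asserts, so the misstatement is not load-bearing. Second, local finiteness is not a consequence of Hermite's inequality (which bounds $\lambda_1$ from above); it follows simply from the uniform lower bound on the smallest eigenvalue over a compact subset of $\Sigma_+$, which leaves only finitely many $u\in\mathbb{Z}^n$ with $u^tPu\le 2$. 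It is also worth a line that $\mathcal{R}$ is closed and contained in $\Sigma_+$ (a semidefinite limit form with nontrivial kernel would admit integer vectors of arbitrarily small value, violating $u^tPu\ge 1$), since you use closedness when extracting a minimizer and a vertex of the optimal face.
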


\begin{theorem}\label{thm-rational}
    Up to a dilation, every rational flat $n$-torus admits a minimal isometric immersion into some spheres.  
\end{theorem}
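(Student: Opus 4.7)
The plan is to apply the converse direction of Theorem~\ref{thm-vari}: writing $P_0 = Q_\Lambda/n$, which is rational and positive definite, we look for a dilation $P = \lambda P_0$ and a finite set $X \subset \mathbb{Z}^n$ such that $P \in \overset{\circ}{C_X}$ and $P$ is a critical point of $\det|_{C_X}$. By the computation in the proof of Theorem~\ref{thm-vari}, the critical-point condition is equivalent to the uniform length condition $Y^t P^{-1} Y = n$ for all $Y \in X$; the convex combination constraint $\sum y_j = 1$ then follows automatically from the trace identity $n = \langle P^{-1}, P\rangle$.

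I would first invoke Lemma~\ref{lem-Voronoi} to select a perfect form $Q_p$ with $P_0 \in \mathcal{V}(Q_p)$, obtaining integer vectors $u_1,\dots,u_k \in \mathrm{Min}(Q_p)$ and nonnegative coefficients $\mu_j$ with $P_0 = \sum_j \mu_j\, u_j u_j^t$; by perfection of $Q_p$, the family $\{u_j u_j^t\}$ spans $\mathrm{Sym}_n$. The obstruction is that the $u_j$ do not share a common $P_0^{-1}$-length, so I would radially project each $u_j$ onto a common hyper-ellipsoid $\mathcal{Q}_c = \{v : v^t P_0^{-1} v = c\}$ for a chosen rational $c>0$, obtaining real vectors $v_j = t_j u_j \in \mathcal{Q}_c$ with $t_j = \sqrt{c/(u_j^t P_0^{-1} u_j)}$. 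This converts the Voronoi identity into a strictly positive conic decomposition $P_0 = \sum_j \tilde\mu_j\, v_j v_j^t$ based on $\mathcal{Q}_c$, with $\{v_j v_j^t\}$ still spanning $\mathrm{Sym}_n$.

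By Lemma~\ref{lem-rational} applied to the rational form $P_0^{-1}/c$ (and, if necessary, after replacing $c$ by an additional dilation), rational points on $\mathcal{Q}_c$ are dense; I would approximate each $v_j$ by such a rational point $\tilde v_j$. For a sufficiently fine approximation, Lemma~\ref{lem-rational1} and the openness of the spanning property imply that $\{\tilde v_j \tilde v_j^t\}$ still spans $\mathrm{Sym}_n$, and since positivity of coefficients is open as well, the rational linear system $\sum_j \alpha_j\, \tilde v_j \tilde v_j^t = P_0$ admits a rational solution with all $\alpha_j > 0$. Clearing a common denominator $K$ of the $\tilde v_j$ produces integer vectors $Y_j = K \tilde v_j$ of uniform $P_0^{-1}$-length $K^2 c$. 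Dilating $P_0$ to $P = (K^2 c/n)\, P_0$ renormalizes this common length to $n$, so $(P, X = \{Y_j\})$ satisfies all hypotheses of Theorem~\ref{thm-vari}, producing a minimal isometric immersion of the torus with Gram matrix $nP$, which is a dilation of $\Lambda_n$.

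The main obstacle is the rational perturbation step: transporting the integer Voronoi decomposition onto a rational common $P_0^{-1}$-ellipsoid by approximation, while simultaneously preserving strict positivity of all coefficients and the rationality of the resulting linear system. This is precisely where the three lemmas work together: perfection of $Q_p$ (Lemma~\ref{lem-Voronoi}) supplies the spanning integer configuration, Lemma~\ref{lem-rational} provides rational density on a rational ellipsoid, and Lemma~\ref{lem-rational1} (via the open spanning condition) ensures the perturbation remains non-degenerate. A secondary technicality is to ensure that $P_0$ lies in the interior of the chosen Voronoi cell so that all initial $\mu_j > 0$; if $P_0$ sits on a boundary face between cones, one enlarges the initial set of $u_j$ by incorporating shortest vectors of an adjacent perfect form before carrying out the projection and approximation.
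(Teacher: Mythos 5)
Your proposal follows essentially the same route as the paper's proof: reduce via the converse part of Theorem~\ref{thm-vari} to finding integer vectors of a common $P^{-1}$-length whose rank-one matrices contain the (dilated) matrix $Q^{-1}/n$ in the interior of their convex hull, obtain a conic decomposition from the Vorono\"i covering (Lemma~\ref{lem-Voronoi}), project to a common hyper-ellipsoid, perturb to rational points using the density statement (Lemma~\ref{lem-rational}) together with openness of the spanning/positivity conditions, and clear denominators before invoking Theorem~\ref{thm-vari}. Your preliminary observations (criticality $\Leftrightarrow$ uniform length, and $\sum y_j=1$ coming from $\langle P^{-1},P\rangle=n$) are correct and consistent with how the paper uses its Lemma~\ref{lem-rational1}.

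The one place where your argument is weaker than the paper's is the step you call a ``secondary technicality.'' As written, your main construction conflates two different sets: perfection of $Q_p$ guarantees that \emph{all} of $\mathrm{Min}(Q_p)$ gives a spanning family $\{u u^t\}$, but the vectors actually carrying positive coefficients $\mu_j$ in the decomposition of $P_0$ need not span, and conversely including all minimal vectors forces some $\mu_j=0$, so $P_0$ need not lie in the interior of the hull you build. Your proposed remedy (``incorporate shortest vectors of an adjacent perfect form'') only addresses the codimension-one face case and, even there, merely enlarging the vertex set does not by itself produce a strictly positive spanning decomposition; one still needs a perturbation argument. The paper avoids any analysis of the face structure with a uniform trick: set $Q^*=Q^{-1}/n-\epsilon\sum_{u\in\mathrm{Min}(Q_p)}uu^t$ for small $\epsilon>0$ (still positive definite), apply Lemma~\ref{lem-Voronoi} again to $Q^*$ to write it as $\sum_i c_i v_iv_i^t$ with $c_i>0$, and add back $\epsilon\sum uu^t$; this yields a decomposition of $Q^{-1}/n$ with \emph{all} coefficients strictly positive and whose support contains the full spanning set $\mathrm{Min}(Q_p)$. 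If you replace your adjacent-cone fix by this $\epsilon$-subtraction (which is in the same spirit as what you would need anyway), your proof is complete and matches the paper's.
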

\begin{proof}
Let $\mathbb{R}^n/\Lambda_n^*$ be a rational flat $n$-torus, and $Q\in GL(n,\mathbb{Q})$ be a Gram matrix of $\Lambda_n$ with respect to some generator. Assume that $Y_1, \cdots, Y_N$ are points on the hyper-ellipsoid $\mathcal{Q}$ satisfying the property of Lemma~\ref{lem-rational1}. 

In the space $\mathrm{Sym}_n$, we consider the affine hyperplane $\Pi_Q$ defined by 
$$\Pi_Q\triangleq\{P\;|\; \langle P, Q\rangle =1\}.$$
Note that all of $Y_1 Y_1^t, Y_2 Y_2^t, \cdots Y_N Y_N^t$ belong to $\Pi_Q$, and 
from Lemma~\ref{lem-rational1}, we have  
$$\Pi_Q=\{\lambda_1 Y_1 Y_1^t+\lambda_2 Y_2 Y_2^t+ \cdots+\lambda_N Y_N Y_N^t\;|\; \lambda_1, \lambda_2, \cdots, \lambda_N\in \mathbb{R}, \lambda_1+\lambda_2+ \cdots +\lambda_N=1\}.$$

Obviously, $\frac{Q^{-1}}{n}$ is contained in $\Pi_Q$, so $\Pi_Q\cap \Sigma_{+}\not=\emptyset$. Since  $\ln\circ \det$ on $\Sigma_+$ is strictly concave,  the maximum point exists and is unique on $\Pi_Q\cap \Sigma_{+}$ for the determinant function $\det$. We denote it by 
$Q_1$. It follows from the proof of Theorem 3.6 in \cite{LWX} that $$\langle \frac{Q_1^{-1}}{n},Y_i Y_i^t\rangle =1,~~~1\leq i\leq N.$$ Using Lemma~\ref{lem-rational1} again, we obtain that $Q_1=\frac{Q^{-1}}{n}$. 

By Lemma~\ref{lem-Voronoi}, $Q^{-1}/n$ is contained in some Vorono\"i domain $\mathcal{V}(Q_p)$ with vertices $u_iu_i^t$ ($1\leq i\leq N)$. However, it's location could be on the boundary of $\mathcal{V}(Q_p)$. To proceed, we need to find an open cone containing $Q^{-1}/n$.
Because $Q^{-1}/n$ is positive-definite, there exists $\epsilon>0$ so small that $Q^*=Q^{-1}/n-\epsilon\sum_{i=1}^N u_iu_i^t$ is still positive-definite. Analogously, $Q^*$ is also contained in some Vorono\"i domain. We may assume that
\[
Q^*=\sum_{i=1}^k c_i v_i{v_i}^t,\quad c_i>0,~~k\leq N,
\]
which leads to
\[
Q^{-1}/n=\sum_{i=1}^k c_i v_i{v_i}^t+\epsilon\sum_{i=1}^N u_iu_i^t,
\]
i.e., $\frac{Q^{-1}}{n}$ is contained inside the polytope with vertices $v_i{v_i}^t$ and $u_ju_j^t$. Since $v_i^tQv_i$ and $u_j^tQu_j$ are all positive, we may assume that all of them equal $1$ after performing the necessary rescaling on $v_i$ and $u_j$, which implies that all these  points lie on the hyper-ellipsoid $\mathcal{Q}$. 
From Lemma~\ref{lem-rational} and the rationality assumption of $Q$, there are finite rational points $R_1, R_2, \cdots, R_{N+k}$ on $\mathcal{Q}$ so close to $v_i$ and $u_j$ that $\frac{Q^{-1}}{n}$ can be expressed as a convex combination of 
$$\{R_1 R_1^t, R_2 R_2^t,\cdots,R_{N+k} R_{N+k}^t\}.$$ Multiplying a suitable integer $\mu$ to $R_1,\cdots, R_{N+k}$, we  obtain a set 
$$Y=\{\mu R_1, \mu R_2, \cdots, \mu {R}_{N+k}\}\subset \mathbb{Z}^n.$$ 
Then $\{Q/\mu^2,Y\}$ 
forms a matrix data satisfying the assumption of Theorem~\ref{thm-vari}; hence it provides a minimal isometric  immersion of $\mathbb{R}^n/\mu\Lambda_n^*$.  

 \end{proof}
In the above proof, 
the density property implies there are infinitely many choices of rational points $\{R_1,\cdots,R_{N+k}\}$, and so there exist  infinitely many choices of $\mu$ and $Y$ in the construction of matrix data. As a consequence, we derive the following proposition, which extends another result previously asserted by Bryant for the $2$-torus in \cite{Bryant}. 
\begin{proposition}\label{eq-infinite}
    For every rational flat $n$-torus $T^n$, there exist infinitely many positive integers $k$ so that $T^n$ can be minimally immersed into spheres using the $k$-th eigenfunctions.
\end{proposition}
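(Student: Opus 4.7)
The plan is to exhibit, for the given rational torus $T^n=\mathbb{R}^n/\Lambda_n^*$, an infinite family of minimal isometric immersions obtained from a \emph{single} configuration of rational points on the hyper-ellipsoid $\mathcal{Q}$, each using eigenfunctions belonging to a different eigenspace of $T^n$. Concretely, I would fix one collection of rational points $R_1,\dots,R_{N+k}$ on $\mathcal{Q}$ and one common denominator $\mu_0$ produced by the proof of Theorem~\ref{thm-rational}, so that $\{Q/\mu_0^2,\,\{\mu_0 R_i\}\}$ satisfies the hypotheses of Theorem~\ref{thm-vari}.

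The key observation is that both the constraint $R_i^tQR_i=1$ and the interior convex hull identity $Q^{-1}/n=\sum c_iR_iR_i^t$, with $c_i>0$ and $\sum c_i=1$, depend only on the rational points themselves and not on the denominator used to clear them. Consequently, for every positive integer $\ell$, the rescaled matrix data $\{Q/(\ell\mu_0)^2,\,\{\ell\mu_0R_i\}\}$ again verifies the hypotheses of Theorem~\ref{thm-vari} and therefore produces a minimal isometric immersion of the homothetic torus $\mathbb{R}^n/(\ell\mu_0)\Lambda_n^*$.

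To conclude, I would transfer these immersions back to $T^n$ via the canonical identification of $k$-th eigenspaces between homothetic flat tori. The integer tuples $\ell\mu_0R_i\in\mathbb{Z}^n$ involved in the immersion for a given $\ell$ have squared length $(\ell\mu_0)^2$ with respect to the integer quadratic form $A\mapsto A^tQA$ on $\mathbb{Z}^n\setminus\{0\}$; equivalently, they lie in the $k_\ell$-th eigenspace of $T^n$, where $k_\ell$ is the position of $(\ell\mu_0)^2$ in the increasing enumeration of distinct values attained by this form. Since the values $(\ell\mu_0)^2$ are pairwise distinct positive integers and each is realized by the integer vector $\ell\mu_0R_i$, the indices $k_\ell$ are pairwise distinct; as the spectrum is discrete and $(\ell\mu_0)^2\to\infty$, we also have $k_\ell\to\infty$, yielding infinitely many $k$'s. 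The only step requiring attention is the (automatic) fact that distinct attained values of $A^tQA$ occupy distinct positions in the ordered spectrum, so no appeal to the density statement of Lemma~\ref{lem-rational} is actually needed for this conclusion, although such an appeal would supply many more non-proportional examples.
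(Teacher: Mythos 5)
Your argument is correct, and it reaches the conclusion by a somewhat different mechanism than the paper. The paper obtains infinitely many admissible $k$ by re-invoking the density of rational points on $\mathcal{Q}$ (Lemma~\ref{lem-rational}): infinitely many distinct configurations $\{R_i\}$ force infinitely many denominators $\mu$ (only finitely many points of $\tfrac{1}{\mu}\mathbb{Z}^n$ lie on the compact hyper-ellipsoid for each fixed $\mu$), hence infinitely many eigenvalues $4\pi^2\mu^2$ of $T^n$ get used. You instead fix one configuration coming from the proof of Theorem~\ref{thm-rational} and exploit the scale invariance of the hypotheses of Theorem~\ref{thm-vari}: the relations $R_i^tQR_i=1$ and $\tfrac{Q^{-1}}{n}=\sum_i c_iR_iR_i^t$ with $c_i>0$, $\sum_i c_i=1$, are unchanged when $(Q,\{\mu_0R_i\})$ is replaced by $(Q/\ell^2,\{\ell\mu_0R_i\})$, and criticality of the determinant on the convex hull is automatic from these two conditions (the derivative of $\ln\det$ at $\tfrac{Q^{-1}}{n}$ in the direction $R_iR_i^t-R_jR_j^t$ is $n(R_i^tQR_i-R_j^tQR_j)=0$), so every integer multiple $\ell\mu_0$ yields a valid matrix data. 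Your approach is more economical, since density is needed only once to produce the initial configuration, and your bookkeeping of the indices $k_\ell$ (distinct attained values $(\ell\mu_0)^2$ of the quadratic form occupy distinct, unbounded positions in the ordered spectrum) is more explicit than what the paper records. The trade-off is that your immersions for different $\ell$ are essentially coverings of a single immersion, obtained by precomposing with the degree-$\ell^n$ self-covering of $T^n$, whereas the paper's density argument produces non-proportional integer sets $Y$ and hence genuinely different immersions; for the statement as given, either output suffices.
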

\begin{rem}
 Given a minimal rational flat $n$-torus $x: T_1^n=\mathbb{R}^n/\Lambda_n\rightarrow \mathbb{S}^m$ with $\{Q, Y\}$ as its matrix data.  
    The standard congruence transformation shows the existence of a rational matrix $B$, such that $\det B=\pm 1$ and $D\triangleq B^tQB$ is diagonal. Decompose $B^{-1}$ as $B^{-1}=F^{-1}\widetilde{B}$, where $F$ is diagonal and $\widetilde{B}, F\in M(n,\mathbb{Z})$. 
    Consider the flat $n$-torus $T_2^n\triangleq \mathbb{R}^n/\tilde{\Lambda}_n$, where $\tilde{\Lambda}_n$ is an orthotope lattice taking $D^{-1}F^{2}$ as its Gram matrix. Then the matrix data $\{D(F^{-1})^2, Y\}$ gives a minimal immersion of $\tilde{x}: T_2^n \rightarrow \mathbb{S}^m$. It is easy to see that $\tilde{\Lambda}_n$ is a sublattice of ${\Lambda}_n$, which implies $\tilde{x}$ is a covering of $x$.  
    Therefore, for every minimal rational flat $n$-torus in spheres, its image can be viewed as the result of a minimal immersion of an orthotope flat $n$-torus. 

\end{rem}

\section{Minimal isometric immersions of 
flat $3$-tori}\label{sec-3tori}
In this section, we {focus on} the minimal isometric immersions (embeddings) of flat $3$-tori into spheres. {For irrational cases, we establish an upper bound on the algebraic irrationality degree and demonstrate its optimality through explicit examples. Furthermore, we show all irrational minimal flat 3-tori must be homogeneous, while an  explicit non-homogeneous example in the rational case is provided. 
}
 
\subsection{Irrational minimal flat $3$-tori}\label{subsec-3tori}
 Note that no irrational flat $2$-torus admits minimal isometric immersions into spheres. Our work in \cite{LWX} showed that this remains true for $\lambda_1$-minimal isometric immersions of irrational $3$-tori. 
In this subsection, we consider the general minimal isometric immersions of irrational flat $3$-tori. 

We define a flat torus to be {\em quadratic irrational} (respectively {\em cubic irrational}, {\em quartic irrational}) if there exists a quadratic (resp. cubic, quartic) extension of $\mathbb{Q}(w)/\mathbb{Q}$ such that its Gram matrix lies in $GL(n, \mathbb{Q}(w))\setminus GL(n, \mathbb{Q})$.  

We first introduce an approach to construct minimal isometric immersions of quadratic 
irrational flat $3$-tori via Theorem~\ref{thm-vari}. 

We begin by selecting a set $Y=\{Y_1, Y_2, \cdots, Y_N\}\subset \mathbb{Z}^3$ such that
$$\mathrm{rank}\,{Y}=3,~~~~~\mathrm{rank}\{Y_1Y_1^t, Y_2Y_2^t, \cdots, Y_NY_N^t\}=5,$$
Note that in $\mathrm{Sym}_3$, the line defined by the constraints 
$$\langle Q, Y_jY_j^t\rangle=1, ~~~1\leq j\leq N$$
intersects the hyperplane $\mathrm{Span}\{Y_1Y_1^t, Y_2Y_2^t, \cdots, Y_NY_N^t\}$ at a unique point, which is denoted by $Q_0$. Since such $Q_0$ is completely determined by 
the linear equations
$$\langle \sum_{k=1}^N c_k Y_k Y_k^t, Y_jY_j^t\rangle=1, ~~~1\leq j\leq N,$$
it is obviously rational. 

Next,  we choose a rational matrix on the line 
$$\langle Q, Y_jY_j^t\rangle=0, ~~~1\leq j\leq N,$$
and denote it by $Q_1$. Then the hyper-ellipsoids passing through all points in $Y$ constitute a line segment, which can be parameterized by $W_Y=\{Q_0+t\,Q_1\;|\; t\in (a,b)\}$. 
If necessary, we rechoose $Q_0$ as $Q_0+t_0 Q_1$ for some rational $t_0\in (a,b)$ so that $Q_0$ is non-degenerate. 

Then, we consider the maximal point of $\det$ restricted on $W_Y$. Observe that $\det(Q_0+t\,Q_1)$,  as a polynomial in $t$, has degree exactly $3$. Consequently, the point $t_0$ maximizing the determinant function $\det$ on $W_Y$ belongs to $GL(n, \mathbb{Q}(w))$, where $\mathbb{Q}(w)$ is a quadratic extension of $\mathbb{Q}$. 
This maximal point is not rational if and only if the derivative of 
$$\det(Q_0+t\,Q_1)=\det Q_0 \left(1 + \operatorname{tr}(Q_0^{-1}Q_1)\,t + \frac{\operatorname{tr}(Q_0^{-1}Q_1)^2 - \operatorname{tr}(Q_0^{-1}Q_1Q_0^{-1}Q_1)}{2}\,t^2 + \det(Q_0^{-1}Q_1)\,t^3\right)$$ with respect to $t$ has no rational zeros, which is equivalent to say that 
$$\left(\operatorname{tr}(Q_0^{-1}Q_1)^2 - \operatorname{tr}(Q_0^{-1}Q_1Q_0^{-1}Q_1)\right)^2 - 12 \det(Q_0^{-1}Q_1) \operatorname{tr}(Q_0^{-1}Q_1)
$$
is not the square of some rational number. 

Finally, to obtain the desired minimal isometric immersions into spheres we need to verify whether the inverse of $3(Q_0+t_0\,Q_1)$ lies in the convex hull $C_Y$. 

\begin{example}\label{ex-rank5} {A quadratic irrational minimal flat $3$-torus in $S^9$.} 

Consider the integer set 
$$Y=\left(
\begin{array}{ccccc}
 1 & 0 & 0 & 6 & 6 \\
 0 & 1 & 0 & 12 & 9 \\
 0 & 0 & 1 & -15 & -12 \\
\end{array}
\right).$$
By the above approach, it is straightforward to calculate that 
$$Q_0=\left(
\begin{array}{ccc}
 1 & -\frac{343}{1233} & \frac{397}{1233} \\
 -\frac{343}{1233} & 1 & \frac{1048}{1233} \\
 \frac{397}{1233} & \frac{1048}{1233} & 1 \\
\end{array}
\right),~~~Q_1=\left(
\begin{array}{ccc}
 0 & 10 & 6 \\
 10 & 0 & 1 \\
 6 & 1 & 0 \\
\end{array}
\right),$$
and the maximal point on $
W_Y=\{Q_0+t\,Q_1\;|\; t\in \mathbb{R}\}\cap \Sigma_+$ is given by  
$$t_0=\frac{39337-137 \sqrt{10801}}{443880}.$$
Then, we obtain a minimal embedding of a flat irrational $3$-torus in $\mathbb{S}^9$, whose matrix data is given by 
    $$Q=\left(
\begin{array}{ccc}
 1 & \frac{39337-137 \sqrt{10801}}{44388}-\frac{343}{1233} & \frac{39337-137 \sqrt{10801}}{73980}+\frac{397}{1233} \\
 \frac{39337-137 \sqrt{10801}}{44388}-\frac{343}{1233} & 1 & \frac{39337-137 \sqrt{10801}}{443880}+\frac{1048}{1233} \\
 \frac{39337-137 \sqrt{10801}}{73980}+\frac{397}{1233} & \frac{39337-137 \sqrt{10801}}{443880}+\frac{1048}{1233} & 1 \\
\end{array}
\right),$$
$$Y=\left(
\begin{array}{ccccc}
 1 & 0 & 0 & 6 & 6 \\
 0 & 1 & 0 & 12 & 9 \\
 0 & 0 & 1 & -15 & -12 \\
\end{array}
\right),~~~(c_1^2, c_2^2, c_3^2, c_4^2, c_5^2)=$$
{\tiny$$\left(\frac{3 \left(12773-107 \sqrt{10801}\right)}{48040},\frac{27 \left(1105-7 \sqrt{10801}\right)}{38432},\frac{3 \left(541 \sqrt{10801}-52459\right)}{192160},\frac{121721+481 \sqrt{10801}}{576480},\frac{7 \left(191 \sqrt{10801}+2791\right)}{576480}\right).$$}
\!\!The embeddedness property follows directly from Remark~\ref{rk-em}.
\end{example}
We point out that $\rk\{Y_jY_j^t\,|\, Y_j\in Y\}=5$ is not the necessary condition to produce a quadratic irrational flat $3$-torus allowing minimal immersion into $\mathbb{S}^m$, as shown in the following example. 
\begin{example}{A quadratic irrational flat $3$-torus into $\mathbb{S}^7$.}

The matrix data 
$$
Y=\left(
\begin{array}{cccc}
 1 & 0 & 0 & -3 \\
 0 & 1 & 0 & 4 \\
 0 & 0 & 1 & -3 \\
\end{array}
\right),
$$
\vskip 0.2cm
$${\small Q=\left(
\begin{array}{ccc}
 1 & \frac{1}{144} \left(115-\sqrt{553}\right) & \frac{1}{54} \left(16-\sqrt{553}\right) \\
 \frac{1}{144} \left(115-\sqrt{553}\right) & 1 & \frac{1}{144} \left(115-\sqrt{553}\right) \\
 \frac{1}{54} \left(16-\sqrt{553}\right) & \frac{1}{144} \left(115-\sqrt{553}\right) & 1 \\
\end{array}
\right)},
$$
\vskip 0.2cm
\begin{small}$$ (c_1^2,\, c_2^2,\, c_3^2,\, c_4^2)=(38-\sqrt{553})\left(\frac{2}{99},\frac{\sqrt{553}-13}{1782},\frac{2}{99},\frac{\sqrt{553}+17}{1782}\right),$$
\end{small}
gives a minimal  embedding of a quadratic irrational flat $3$-torus into $\mathbb{S}^7$, where the embeddedness property follows directly from Remark~\ref{rk-em}.  
\end{example}
\begin{example} {A cubic irrational minimal flat $3$-torus in $\mathbb{S}^7$.}

Consider the matrix data 
$$
Y=\left(
\begin{array}{cccc}
 4 & 0 & 0 & -5 \\
 0 & 4 & 0 & 2 \\
 0 & 0 & 4 & -3 \\
\end{array}
\right),
$$
\vskip 0.2cm
$${\small Q=\frac{1}{16}\left(
\begin{array}{ccc}
 1 & a & \frac{(2 a-5) (10 a-11)}{3 (20 a-29)} \\
 a & 1 & -\frac{(5 a-2) (10 a-11)}{3 (20 a-29)} \\
 \frac{(2 a-5) (10 a-11)}{3 (20 a-29)} & -\frac{(5 a-2) (10 a-11)}{3 (20 a-29)} & 1 \\
\end{array}
\right)},
$$
where $a$ is a real root of the following polynomial (irreducible over $\mathbb{Q}$)
$$f(x)=50 x^3-160 x^2+149 x-33,$$ 
with numerical approximation $a\approx0.321061$. Then, by taking 
\begin{align*}
    &c_1^2=\frac{25 a^2-95 a+76}{6 (a^2-1)  (5 a-7) (20 a-29)}\approx 0.0733429,\\
    &c_2^2=\frac{-(430 a^2-1235 a+883)}{15 (a^2-1)  (5 a-7) (20 a-29)}\approx0.323914,\\
    &c_3^2=\frac{3 (5 a-9)}{10 (a+1) (5 a-7)}\approx0.31128,\\
    &c_4^2=\frac{4 (10 a-11)}{15 (a+1) (5 a-7)}\approx0.291462,
\end{align*}
in \eqref{eq:flat}, we obtain 
a minimal immersion of a cubic irrational flat $3$-torus into $\mathbb{S}^7$. {
Unfortunately, this immersion fails to be an embedding. However, we can  decompose from it an embedded one, as illustrated in the following example.}
\end{example}
\begin{example} {A cubic irrational minimal flat $3$-torus in $\mathbb{S}^7$.}

{Consider the matrix data 
$$
Y=\left(
\begin{array}{cccc}
 2 & 0 & 0 & 5 \\
 0 & 2 & 0 & 3 \\
 0 & 0 & 2 & 4 \\
\end{array}
\right),
$$
\vskip 0.2cm
$${\small Q=\frac{1}{4}\left(
\begin{array}{ccc}
 1 & a & -\frac{(3 a+5) (15 a+23)}{8 (15 a+17)} \\
 a & 1 & -\frac{(5 a+3) (15 a+23)}{8 (15 a+17)} \\
 -\frac{(3 a+5) (15 a+23)}{8 (15 a+17)} & -\frac{(5 a+3) (15 a+23)}{8 (15 a+17)} & 1 \\
\end{array}
\right)},
$$
where $a$ is a real root of the following polynomial (irreducible over $\mathbb{Q}$)
$$f(x)=675 x^3+765 x^2-291 x-253,$$ 
with numerical approximation $a\approx-0.501137$. Then, by taking 
\begin{align*}
    &c_1^2=\frac{-8 \left(225 a^2+420 a+139\right)}{27 (a^2-1)(15 a-1) (15 a+17)}\approx 0.0733429,\\
    &c_2^2=\frac{8 \left(45 a^2+60 a+7\right)}{5 (a^2-1)(15 a-1) (15 a+17)}\approx0.31128,\\
    &c_3^2=\frac{-16 (15 a+11)}{45 (a+1) (15 a-1)}\approx0.291462,\\
    &c_4^2=\frac{-4 (15 a+23)}{45 (a+1) (15 a-1)}\approx0.323914,
\end{align*}
in \eqref{eq:flat}, we obtain 
a minimal embedding of a cubic irrational flat $3$-torus into $\mathbb{S}^7$. The embeddedness can be directly verified by analyzing  \eqref{eq-em} and demonstrating the absence of non-zero integer points in $\mathcal{Y}(\Omega)$.}
\end{example}
\begin{example}\label{ex-rank4-2} {A quartic irrational minimal flat $3$-torus in $\mathbb{S}^7$.}

Consider the matrix data 
$$
Y=\left(
\begin{array}{cccc}
 1 & 0 & 0 & 5 \\
 0 & 1 & 0 & 7 \\
 0 & 0 & 1 & 8 \\
\end{array}
\right),
$$
\vskip 0.2cm
$${\small Q=\left(
\begin{array}{ccc}
 1 & a & -\frac{7 (5 a+11) (7 a+5)}{630 a+666} \\
 a & 1 & -\frac{7 (5 a+7) (5 a+11)}{630 a+666} \\
 -\frac{7 (5 a+11) (7 a+5)}{630 a+666} & -\frac{7 (5 a+7) (5 a+11)}{630 a+666} & 1 \\
\end{array}
\right)},
$$
where $a$ is a real root of the following polynomial (irreducible over $\mathbb{Q}$)
$$f(x)=-14700 x^4-23240 x^3+1079 x^2+10730 x+1507,$$ 
with numerical approximation $a\approx-0.149201$. Then, by taking 
\begin{align*}
    &c_1^2=\frac{-4 \left(3430 a^3+10367 a^2+11861 a+5122\right)}{21 (a^2-1)  (10 a-1) (14 a+5) (35 a+37)}\approx0.30459,\\
    &c_2^2=\frac{4 \left(5390 a^3-33929 a^2-76085 a-36964\right)}{75 (a^2-1)  (10 a-1) (14 a+5) (35 a+37)}\approx0.26971,\\
    &c_3^2=\frac{8 (509-770 a)}{105 (10 a-1) (14 a+5)}\approx0.0934204,\\
    &c_4^2=\frac{-2 (70 a+137)}{75 (10 a-1) (14 a+5)}\approx0.332279,
\end{align*}
in \eqref{eq:flat} we obtain 
a minimal embedding of a quartic irrational flat $3$-torus into $\mathbb{S}^7$, where the embeddedness property follows directly from Remark~\ref{rk-em}. 
\end{example}

Next, we show that the above examples exausht all the possibilities of the algebraic irrationality degree.  
\begin{theorem}\label{thm-3irra}
   Let $T^3=\mathbb{R}^3/\Lambda_3$ be a flat $3$-torus. If it admits a minimal immersion into some sphere $\mathbb{S}^m$, then the  Gram matrix of $\Lambda_3$ lies in $GL(n, \mathbb{Q}(w))\setminus GL(n, \mathbb{Q})$, and the extension degree of $\mathbb{Q}(w)/\mathbb{Q}$ is at most $4$. Moreover, if the minimal immersion of $T^3$ in $\mathbb{S}^m$ is full and $[\mathbb{Q}(w):\mathbb{Q}]=3$ or $4$, then $m=7$. 
\end{theorem}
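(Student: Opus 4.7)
The plan is to apply the variational characterization Theorem~\ref{thm-vari} to convert the question about the Gram matrix $Q$ of $\Lambda_3^*$ (and thus of $\Lambda_3$, since taking inverses preserves the field of definition) into a constrained optimization problem, and then bound the algebraic degree of the extremum via B\'ezout's theorem.

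Let $Y=\{Y_1,\ldots,Y_N\}\subset\mathbb{Z}^3$ be the integer encoding of the length-$1$ dual lattice vectors, and write $r=\mathrm{rank}\{Y_jY_j^t:1\leq j\leq N\}$ in $\mathrm{Sym}_3$, so that $W_Y$ is a rational affine subvariety of $\mathrm{Sym}_3$ of dimension $6-r$. My first step is to show that irrationality of $Q$ forces $r\in\{4,5\}$. Because $C_Y$ must contain a positive-definite matrix, $\mathrm{rank}(Y)=3$ and hence $r\geq 3$. If $r=3$, a short outer-product argument shows that every $Y_j$ is a scalar multiple of three linearly independent integer vectors $Y_{j_1},Y_{j_2},Y_{j_3}$; in the basis $\{Y_{j_1},Y_{j_2},Y_{j_3}\}$ the constraints $\langle Q,Y_jY_j^t\rangle=1$ pin down the diagonal of $Q$, and a direct computation identifies the unique positive-definite extremum of $\det$ on $W_Y$ as the identity matrix in that basis. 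Returning to the standard basis via the integer change-of-basis matrix yields a rational $Q$. The other extreme $r=6$ pins $Q$ down as the unique rational solution of a determined rational linear system.

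The B\'ezout argument then runs as follows. Pick a rational basepoint $Q_0\in W_Y$ and rational symmetric matrices $Q_1,\ldots,Q_{6-r}$ spanning the tangent directions of $W_Y$, so that a generic element of $W_Y$ is $Q_0+\sum_{k=1}^{6-r}t_k Q_k$. Then
\[
\det\Bigl(Q_0+\sum_{k=1}^{6-r}t_k Q_k\Bigr)
\]
is a polynomial in $t=(t_1,\ldots,t_{6-r})$ of total degree $3$ with rational coefficients, and the critical-point system $\partial \det/\partial t_k=0$ consists of $6-r$ polynomial equations each of degree $2$. B\'ezout's theorem bounds the number of complex solutions by $2^{6-r}$, and the unique positive-definite maximum (which exists and is unique by strict concavity of $\log\det$ on $\Sigma_+$) therefore has algebraic degree at most $2^{6-r}$ over $\mathbb{Q}$. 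Combined with the rational cases, this yields $[\mathbb{Q}(w):\mathbb{Q}]\leq 4$, with the value $4$ only reachable when $r=4$.

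For the moreover statement, assume the immersion is full into $\mathbb{S}^m$ and $[\mathbb{Q}(w):\mathbb{Q}]\in\{3,4\}$. The degree analysis above forces $r=4$, and Proposition~\ref{prop-homo} together with a rigidity argument peculiar to the strongly irrational regime reduces the immersion to its homogeneous model, so that $m=2N-1$ with $N=|Y|$. Since $|Y|\geq r=4$ automatically, the claim becomes $|Y|=4$. If instead $N\geq 5$, then $Y_5Y_5^t$ is a $\mathbb{Q}$-rational linear combination of $Y_1Y_1^t,\ldots,Y_4Y_4^t$; equivalently, the projective classes $[Y_j]\in\mathbb{P}^2$ lie on a common rational plane conic via the Veronese embedding $\mathbb{P}^2\hookrightarrow\mathbb{P}^5$. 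I expect that this extra rational incidence, fed back into the critical-point system, imposes an additional polynomial constraint whose net effect is to reduce the minimal polynomial of the entries of $Q$ to degree at most $2$, contradicting $[\mathbb{Q}(w):\mathbb{Q}]\in\{3,4\}$. Establishing this last implication is the main obstacle: it likely requires a Galois-orbit analysis of the B\'ezout zero set (arguing that the integer point $Y_5$ lies on every Galois conjugate of the unit ellipsoid, yielding an additional rational linear equation on $Q$), or alternatively an explicit classification of $5$-point integer configurations on rational conics combined with a case-by-case examination of the critical-point equations for each such configuration.
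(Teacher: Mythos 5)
Your skeleton for the degree bound agrees with the paper's: reduce via Theorem~\ref{thm-vari} to maximizing $\det$ on $W_Y$, and split on $r=\rk\{Y_jY_j^t\}$, with $r=3,6$ giving rational $Q$ and $r=5$ giving at most a quadratic extension. Where you diverge is $r=4$: the paper does not invoke B\'ezout there. It changes basis by the integer matrix $P=(Y_1,Y_2,Y_3)$, parameterizes $W_{\widetilde Y}$ by the off-diagonal entries $(a,b,c)$ subject to one rational linear constraint coming from $\widetilde Y_4=(r_1,r_2,r_3)^t$, and eliminates by Lagrange multipliers, obtaining $b,c$ as explicit rational functions of $a$ and an explicit quartic over $\mathbb{Q}$ satisfied by $a$. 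Your B\'ezout count of the two quadric critical equations (in the spirit of the paper's general-$n$ Theorem~\ref{thm-degree} and Lemma~\ref{lem-degree}) only bounds the degree if the complex critical locus is zero-dimensional, or at least if the maximizer is an isolated point of it; you assert the count without ruling out a positive-dimensional component through $Q$. This can be patched (a real curve of critical points through $Q\in\Sigma_+$ would make $\log\det$ constant on a segment, contradicting strict concavity, and the complex degenerate cases need a separate check), but as written it is a gap that the paper's explicit elimination avoids, and the explicit elimination is also what delivers the single-generator statement $Q\in GL(3,\mathbb{Q}(w))$ directly.

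The more serious problem is the ``moreover'' part, which you acknowledge you have not proved. The paper gets it almost for free from its $r=4$ analysis: writing $\widetilde Y_j=P^{-1}Y_j$ and expanding any dependence $\widetilde Y_j\widetilde Y_j^t=\sum_i\lambda_{j_i}\widetilde Y_i\widetilde Y_i^t$ entrywise yields the rank-one relations \eqref{eq-rs}, namely $s_{j_1}s_{j_2}=\lambda_{j_4}r_1r_2$, $s_{j_1}s_{j_3}=\lambda_{j_4}r_1r_3$, $s_{j_2}s_{j_3}=\lambda_{j_4}r_2r_3$. When all $r_i\neq 0$ these force any further integer vector to be parallel to $\widetilde Y_4$ (or to one of $\widetilde Y_1,\widetilde Y_2,\widetilde Y_3$ when $\lambda_{j_4}=0$), hence equal to one of $\pm\widetilde Y_1,\dots,\pm\widetilde Y_4$ by \eqref{eq-YQ}, so $\sharp(Y)=4$; and when some $r_i=0$ the constraints become block-diagonal and the maximizer is rational, so degrees $3$ and $4$ cannot occur there. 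This is exactly the missing implication you flag as ``the main obstacle'' (your proposed Galois-orbit or conic-classification routes are not carried out), and without it you have neither $\sharp(Y)=4$ nor the conclusion $m=7$ for a full immersion. You should also be careful that fullness plus $\sharp(Y)=4$ gives only $m\leq 7$ a priori; one still needs to exclude a full immersion with degenerate $AA^t$ (e.g., by observing that a vanishing diagonal block would put $\frac{Q^{-1}}{3}$ in the convex hull of three outer products, which is the $r=3$ situation and forces $Q$ rational), a point your homogeneity-based sketch does not settle either.
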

\begin{proof}
Suppose the matrix data associated to the minimal immersion of $T^3$ is $\{Q, Y\}$. Then we have  
$$\rk\,Y=3,~~~3\leq \rk\{Y_jY_j^t\,|\, Y_j\in Y\}\leq6.$$

If $\rk\{Y_jY_j^t\,|\, Y_j\in Y\}=6$, then $Q$ can be determined uniquely from \eqref{eq-AQA}, and thus it must be rational. 

If $\rk\{Y_jY_j^t\,|\, Y_j\in Y\}=5$, then it follows from the approach stated at the beginning of this section that $Q$ is at most quadratic irrational. 

If  $\rk\{Y_jY_j^t\,|\, Y_j\in Y\}=4$, without loss of generality, we assume that 
\begin{equation}\label{eq-assum}
\rk\{Y_1, Y_2, Y_3\}=3,~~~\rk\{Y_1Y_1^t, Y_2Y_2^t, Y_3Y_3^t, Y_4Y_4^t\}=4.
\end{equation} 
Let $P$ 
be the matrix formed by the column vectors $\{Y_1, Y_2, Y_3\}$. Set $\widetilde{Y}\triangleq P^{-1} Y$ and $\widetilde{Q}\triangleq P^tQP$. Note that $\widetilde{Q}$ achieves the maximum of the determinant function on the set $\Sigma_{\widetilde{Y}}$ of all positive definite matrices such that 
\begin{equation}\label{eq-YQ}
\widetilde{Y}_j^t \widetilde{Q} \widetilde{Y}_j=1, ~~~1\leq j\leq \sharp(Y).
\end{equation}

We assume that $\widetilde{Y}_4=(r_1, r_2, r_3)^t$ and 
$\widetilde{Y}_j=(s_{j_1}, s_{j_2}, s_{j_3})^t$ for $5\leq j\leq \sharp(Y)$. Then it follows from 
$$\widetilde{Y}_j\widetilde{Y}_j^t=\lambda_{j_1} \widetilde{Y}_1\widetilde{Y}_1^t+\lambda_{j_2} \widetilde{Y}_2\widetilde{Y}_2^t+\lambda_{j_3} \widetilde{Y}_3\widetilde{Y}_3^t+\lambda_{j_4} \widetilde{Y}_4\widetilde{Y}_4^t$$
that 
\beq \label{eq-rs}
s_{j_1}s_{j_2}=\lambda_{j_4} r_1r_2,~~~s_{j_1}s_{j_3}=\lambda_{j_4} r_1 r_3,~~s_{j_2}s_{j_3}=\lambda_{j_4} r_2 r_3.
\eeq
Due to \eqref{eq-assum}, we observe that $\{r_1, r_2, r_3\}$ contains at least two non-zero elements. Without loss of generality, we assume that $r_1\neq 0$ and $r_3\neq 0$. If $r_2= 0$, then it is straightforward to verify that all matrices satisfying \eqref{eq-YQ} must necessarily exhibit a block-diagonal structure, which implies the maximum point $\widetilde{Q}$ must be rational. 

Next, we consider the case $r_2\neq 0$.  
If $\lambda_{j_4}=0$, then $\widetilde{Y}_j$ is parallel to one of $\{\widetilde Y_1, \widetilde Y_2, \widetilde Y_3\}$; hence it must equal one of $\{\pm\widetilde Y_1, \pm\widetilde Y_2, \pm\widetilde Y_3\}$ by \eqref{eq-YQ}. 
If $\lambda_{j_4}\neq 0$, 
then using \eqref{eq-rs} we obtain that  $s_{j_2}r_3=r_2s_{j_3}, \,s_{j_1}r_2=r_1s_{j_2},$ and $s_{j_1}r_3=r_1s_{j_3}$, 
which implies $\widetilde{Y}_j// \widetilde{Y}_4$; hence $\widetilde{Y}_j=\pm\widetilde{Y}_4$ by \eqref{eq-YQ}. Therefore, we derive that $\sharp(Y)=4$ in this case. 
We parameterize the matrices in $W_{\widetilde{Y}}$ as 
$$\left(
\begin{array}{ccc}
 1 & a & b \\
 a & 1 & c \\
 b & c & 1 \\
\end{array}
\right),$$
with $a, b, c$ satisfying  
$$2 a\, r_1 r_2 +2 b\,  r_1 r_3 +2 c\, r_2  r_3 +r_1 ^2+r_2 ^2+r_3 ^2=1,~~|a|<1,~~~|b|<1,~~~|c|<1.$$
The determinant of such matrices can be expressed as 
$$2 a b c- a^2 - b^2 - c^2+1.$$ 
Using the method of Lagrange multipliers, the critical point of the determinant function on $\Sigma_{\widetilde{Y}}$ satisfies: 
$$b=-\frac{(a r_2+r_1) \left(2 a r_1 r_2+r_1^2+r_2^2+r_3^2-1\right)}{2 r_3 \left(2 a r_1 r_2+r_1^2+r_2^2\right)},~~~c=-\frac{(a r_1+r_2) \left(2 a r_1 r_2+r_1^2+r_2^2+r_3^2-1\right)}{2 r_3 \left(2 a r_1 r_2+r_1^2+r_2^2\right)},$$
\begin{small}\begin{align*} 12 r_1^3 r_2^3\,a^4+8 r_1^2 r_2^2 \left(2 r_1^2+2 r_2^2-r_3^2-1\right)\,a^3+ r_1 r_2 \left(7 r_1^4+2 r_1^2 \left(5 r_2^2-4 \left(r_3^2+1\right)\right)+7 r_2^4-8 r_2^2 \left(r_3^2+1\right)+\left(r_3^2-1\right)^2\right)a^2\\
+\left(r_1^2+r_2^2\right) \left(r_1^4-2 r_1^2 \left(r_2^2+r_3^2+1\right)+r_2^4-2 r_2^2 \left(r_3^2+1\right)+\left(r_3^2-1\right)^2\right)a
-r_1 r_2 \left(r_1^4+2 r_1^2 r_2^2+r_2^4-\left(r_3^2-1\right)^2\right)=0.
\end{align*}
\end{small}
\!\!Since $r_1, r_2, r_3\in \mathbb{Q}$, the critical point of 
$\det$ satisfies $a,b,c \in \mathbb{Q}(w)$, and the extension degree $[\mathbb{Q}(w):\mathbb{Q}]$ is at most $4$. 

If  $\rk\{Y_jY_j^t\,|\, Y_j\in Y\}=3$, then following the same argument as above, up to multiplying a rational matrix on the left, we may assume that 
\begin{equation*}
Y_1=(1,0,0),~~~Y_2=(0,1,0),~~~Y_3=(0,0,1). 
\end{equation*} 
Therefore, in this case the maximizer of $\det$ on $W_Y$ is $GL(n,\mathbb{Q})$-congruent to the identity matrix.  

The second part of this theorem follows directly from the argument 
of $\rk\{Y_jY_j^t\,|\, Y_j\in Y\}=3$. 
\end{proof}

\subsection{The uniqueness of minimal immersion for irrational flat $3$-torus}

 As shown in Proposition \ref{eq-infinite}, for any given rational flat $n$-torus $T^n$, there exist infinitely many $k\in\mathbb{Z}^+$ such that $T^n$ can be homothetically and minimally immersed into spheres by the $k$-th eigenfunctions. It is a natural question to ask, how many such  immersions an irrational torus can admit into spheres? In the $3$-dimensional case, we show that such an immersion is unique if the  algebraic irrationality degree exceeds $2$.
\begin{theorem}
\label{prop-irrational}
    For every cubic and quartic irrational flat $3$-torus, if a minimal homothetic immersion into spheres exists, then it is unique up to congruence.  
\end{theorem}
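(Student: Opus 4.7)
The proof strategy is to leverage the structural rigidity established in the proof of Theorem~\ref{thm-3irra}. By that theorem, any minimal isometric immersion of a cubic or quartic irrational flat $3$-torus $T^3 = \mathbb{R}^3/\Lambda_3$ into a sphere has matrix data $\{Q, Y\}$ with $\rk\{Y_j Y_j^t : Y_j \in Y\} = 4$; ranks $3$ and $6$ force $Q$ to be rational, while rank $5$ forces at most quadratic irrationality. As analyzed there, this rank constraint forces $Y$ (up to signs and relabeling) to consist of exactly four integer vectors $\{Y_1, Y_2, Y_3, Y_4\}$, where $\{Y_1, Y_2, Y_3\}$ is linearly independent over $\mathbb{Q}$ and $Y_4 = r_1 Y_1 + r_2 Y_2 + r_3 Y_3$ has all $r_i \in \mathbb{Q}^{*}$.

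Suppose $x_1, x_2$ are two such minimal homothetic immersions with matrix data $\{Q_1, Y^{(1)}\}$ and $\{Q_2, Y^{(2)}\}$. Since $Q_1$ and $Q_2$ are both Gram matrices of $\Lambda_3^*$ up to a common positive scaling arising from the homothety, there exist $U \in GL(3, \mathbb{Z})$ and $\lambda > 0$ such that $Q_2 = \lambda U^t Q_1 U$. After this reduction, the problem becomes showing that two valid configurations $Y, Y'$ for the same $Q$ must coincide up to sign changes, permutation, and the action of the stabilizer of $Q$ in $GL(3, \mathbb{Z})$. Since Theorem~\ref{thm-vari} guarantees that $\tfrac{Q^{-1}}{n}$ is the unique maximizer of $\det$ on each of $C_Y$ and $C_{Y'}$, and the coefficients $c_j^2$ in the flat condition are uniquely determined once $Y$ and $Q$ are fixed, the uniqueness of the immersion reduces to showing $Y$ itself is unique.

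The remaining rigidity should follow from the algebraic irrationality. Decompose $Q = Q^{(0)} + w Q^{(1)} + \cdots + w^{d-1} Q^{(d-1)}$ with each $Q^{(k)} \in \mathrm{Sym}_3$ rational and $d = [\mathbb{Q}(w):\mathbb{Q}] \in \{3,4\}$. An integer vector $\xi \in Y$ satisfies $\xi^t Q \xi = \ell^2 \in \mathbb{Q}$, which splits, by comparing the powers of $w$, into the system of $d$ rational quadric equations $\xi^t Q^{(0)}\xi = \ell^2$ together with $\xi^t Q^{(k)}\xi = 0$ for $1 \leq k \leq d-1$. The intersection of these three or four rational quadrics in $\mathbb{R}^3$ is generically a zero-dimensional subscheme, leaving only finitely many integer candidates. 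Combined with the irreducibility over $\mathbb{Q}$ of the cubic (resp. quartic) polynomial $F(a; r_1, r_2, r_3)$ extracted in Theorem~\ref{thm-3irra}, whose rational coefficients depend on the configuration data, this arithmetic rigidity should force $(r_1, r_2, r_3)$ to be determined uniquely up to natural symmetries. The main obstacle, and the most delicate part of the argument, is ruling out the possibility that distinct eigenvalues $\ell_1^2 \neq \ell_2^2$ give rise to genuinely different valid configurations; this appears to require a careful combination of the shortest-vector structure of $\Lambda_3^*$ with the algebraic constraints imposed by the cubic or quartic irrationality of $Q$.
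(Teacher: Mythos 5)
Your setup is sound: by Theorem~\ref{thm-3irra} each homothetic minimal immersion of a cubic or quartic irrational flat $3$-torus has matrix data with exactly four vectors $\{Y_1,Y_2,Y_3,Y_4\}$ whose symmetric squares span a rank-$4$ space, and since these are linearly independent the coefficients $c_j^2$ are determined by $\{Q,Y\}$, so uniqueness does reduce to comparing integer configurations for (a rescaling of) the same $Q$. But the core of the theorem is exactly the part you leave open. Your argument that each candidate vector satisfies the rational quadrics $\xi^tQ^{(k)}\xi=0$ is only claimed to cut out a ``generically'' zero-dimensional locus --- genericity is irrelevant for the specific $Q$ at hand (for instance, nothing in your argument excludes all the $Q^{(k)}$, $k\geq 1$, being proportional, in which case the locus is a whole conic), and finiteness of candidates alone does not rule out a second immersion built from a different eigenvalue. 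You acknowledge this yourself in the last sentence: the case $\ell_1^2\neq\ell_2^2$, i.e.\ a competing datum $\{Q/\lambda^2,X\}$, is precisely what must be excluded, and it is not.

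The paper closes this gap with a three-step argument you would need to supply. First, if $\lambda^2$ is irrational, write each $X_\alpha X_\alpha^t$ as a rational combination of a maximal independent family drawn from $\{X_jX_j^t\}\cup\{Y_iY_i^t\}$; pairing with $Q$ (which gives $1$ on the $Y_iY_i^t$ and $\lambda^2$ on the $X_jX_j^t$) forces the $Y$-coefficient sums to vanish, so $Q$ is orthogonal to $\mathrm{Span}\{X_jX_j^t\}\cap\mathrm{Span}\{Y_iY_i^t\}$; since $Q^{-1}\in\mathrm{Span}\{Y_iY_i^t\}$ and $\langle Q,Q^{-1}\rangle=3\neq0$, one gets $Q^{-1}\notin\mathrm{Span}\{X_jX_j^t\}$, contradicting the flat condition \eqref{eq:flat} for the second immersion. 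Second, if $\lambda^2$ is rational but the combined span has rank $>4$, then $Q$ maximizes $\det$ on the enlarged constraint set $\{\widehat Q\in\Sigma_+ : \langle\widehat Q,Y_iY_i^t\rangle=\langle\widehat Q,\lambda^{-2}X_jX_j^t\rangle=1\}$, and the rank-$5$/rank-$6$ analysis from Subsection 4.1 shows that maximizer is at most quadratic irrational --- contradicting $[\mathbb{Q}(w):\mathbb{Q}]\in\{3,4\}$. Third, once the spans coincide, the two rational quadric cones orthogonal to $\mathrm{Span}\{Y_iY_i^t\}$ meet exactly in the four non-coplanar lines through $Y_1,\dots,Y_4$ (here one uses that $Y_4$ has all three coordinates nonzero in the $Y_1,Y_2,Y_3$ basis, so no common component is possible), whence $X\subset\{\pm\lambda Y_i\}$ and the two data are congruent. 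Your quadric-splitting idea is a cousin of this last step, but without the first two steps --- rationality of $\lambda^2$ and equality of spans --- the proof is incomplete.
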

\begin{proof}
    Suppose $\{Q,Y\}$ is the matrix data for an immersion of such torus. Then it follows from Theorem~\ref{thm-3irra} that $Y=\{Y_1,Y_2,Y_3,Y_4\}$.  
     Furthermore, 
    there exist positive real numbers $c_1, c_2, c_3, c_4$ such that 
    \begin{equation}\label{eq-uniq}
    c_1Y_1Y_1^t+c_2Y_2Y_2^t+c_3Y_3Y_3^t+c_4Y_4Y_4^t=\frac{Q^{-1}}{3},\quad \sum_{i=1}^4c_i=1.
    \end{equation}

    Suppose a matrix data $\{\frac{Q}{\lambda^2},X\}$ gives another minimal immersion of this $3$-torus, where $X=\{X_i,1\leq i\leq 4\}$. Let $4+d=\mathrm{rank}\{X_jX_j^t,Y_iY_i^t|1\leq i,j\leq 4\}$, then
    \[
    \mathrm{dim}(\mathrm{Span}\{X_jX_j^t\}\cap\mathrm{Span}\{Y_iY_i^t\})=4-d.
    \]
 
    If $\lambda^2$ is irrational, we assume $X_1X_1^t,\cdots,X_dX_d^t,Y_1Y_1^t,\cdots,Y_4Y_4^t$ are $4+d$ linearly independent vectors in $\mathrm{Sym}_3$. For any $d<\alpha\leq 4$, we have
    \[
    X_\alpha X_\alpha^t=\sum_{i=1}^4 k_{i\alpha}Y_iY_i^t+\sum_{j=1}^d h_{j\alpha}X_jX_j^t,
    \]
    where all of $k_{i\alpha}$ and $h_{j\alpha}$ are obviously rational. Then from $\langle Q,Y_iY_i^t\rangle=1$, $\langle Q,X_jX_j^t\rangle=\lambda^2$,
    \begin{equation}\label{eq-zero}
    \sum_{i=1}^4 k_{i\alpha}=1-\sum_{j=1}^d h_{j\alpha}=0.
    \end{equation}
    So
    \[
    X_\alpha X_\alpha^t-\sum_{j=1}^d h_{j\alpha}X_jX_j^t=\sum_{i=1}^4 k_{i\alpha}Y_iY_i^t,\quad d+1\leq \alpha\leq 4
    \]
    are exactly $4-d$ linearly independent vectors in $\mathrm{Span}\{X_jX_j^t\}\cap\mathrm{Span}\{Y_iY_i^t\}$. Due to \eqref{eq-zero},  $Q$ is orthogonal to $\mathrm{Span}\{X_jX_j^t\}\cap\mathrm{Span}\{Y_iY_i^t\}$. However, \eqref{eq-uniq} shows $Q^{-1}\in \mathrm{Span}\{Y_iY_i^t\}$ and $\langle Q,Q^{-1}\rangle=3\not=0$, which suggests  $Q^{-1}\not\in \mathrm{Span}\{X_jX_j^t\}$. 
    So $\{ \frac{Q}{\lambda^2},X\}$ cannot give a minimal immersion into spheres, yielding a contradiction. 
    
    Hence, $\lambda^2$ is rational. In this case, applying the similar argument as the beginning of subsection 4.1, we conclude that the maximum of the determinant function on
    \[
    \{\widehat{Q}\in\Sigma_+\subset \mathrm{Sym}_3 \mid \langle\widehat{Q},Y_iY_i^t\rangle=\langle\widehat{Q},\frac{1}{\lambda^2}
    X_jX_j^t\rangle=1, 1\leq i,j\leq 4\}
    \]
    has extension degree at most $2$ provided $d>0$. Since this maximum cannot exceed the maximal value of the determinant on $W_Y$ (which is attained by $Q$), we conclude that $Q$ itself must be the maximizer. By our initial assumption, $Q$ is either a cubic or quartic form.  
    Hence, we derive that $d=0$ and then 
    $\mathrm{Span}\{X_jX_j^t\}=\mathrm{Span}\{Y_iY_i^t\}$. 
    We choose 
    $Q_1,Q_2\in \mathrm{Sym}_3$ such that $\mathrm{Span}\{Q_1,Q_2\}\perp\mathrm{Span}\{Y_iY_i^t,1\leq i\leq 4\}$. Note that $x^tQ_1x=0$ and $x^tQ_2x=0$ define two quadratic cones whose intersection is composed by $4$ non-coplanar lines (spanned by $Y_1, Y_2, Y_3, Y_4$, respectively).
    Therefore, $\{\pm Y_1, \pm Y_2, \pm Y_3, \pm Y_4\}$ constitute the whole intersection of the hyper-ellipsoid defined by $Q$ and the aforementioned two quadratic cones. Consequently, we deduce that $X\subset\{\pm\lambda Y_i\}$, and thus $\{ \frac{Q}{\lambda^2},X\}$ is conformally equivalent to $\{Q,Y\}$.

\end{proof}
\begin{rem}
As for the quadratic case, it seems to be very subtle for the uniqueness and the rigidity of minimal immersions. 
Let $\{Q, Y\}$ be the matrix data for a minimal immersion of a quadratic irrational flat 3-torus with $\rk\{Y_jY_j^t,|, Y_j\in Y\}=5$. 
It turns out that there may exist many rational points on the hyper-ellipsoid $\mathcal{Q}$ determined by $u^t Q u=1$ so that new minimal immersions can be constructed. In fact, given a new rational point $Z$, let $l$ be a common multiple of the denominators of its coordinates. Note that $ZZ^t\in \{Y_jY_j^t,|, Y_j\in Y\}$, since $Q$ is irrational. 
For each face $F$ of the convex hull of $\{Y_j Y_j^t \mid Y_j\in Y\}$, consider the inverse cone of $F$ with respect to  $\frac{Q^{-1}}{3}$. Then there exists at least one such inverse cone containing $ZZ^t$, and we denote its corresponding face by $\widetilde{F}$. Define $\widetilde{Y}$ to be the set consisting of the vertices of $\widetilde{F}$ together with $Z$. It follows that $\{\frac{Q}{l^2}, l\widetilde{Y}\}$ provides new minimal immersions. 

To show how many rational points lie on $\mathcal{Q}$, we write $Q=Q_1+\alpha Q_2$, where $Q_1, Q_2\in GL(n,\mathbb{Q})$ are two symmetric matrices and $[\mathbb{Q}(\alpha):\mathbb{Q}]=2$. Then a rational point $Z_1$ lies on $\mathcal{Q}$ if and only if 
$$Z_1^t Q_1 Z_1=1,~~~Z_1^t Q_2 Z_1=0,$$
 i.e., it lies on the intersection of two quadratic surfaces. In algebraic geometry, it is known (cf.   \cite[Chap. 4, Ex. 3.6]{Hartshorne}) that such an intersection is either a rational curve (possibly singular and reducible) or a quartic elliptic curve. In the rational case, the curve admits infinitely many rational points. In the elliptic case, the set of rational points forms a finitely generated abelian group (by the Mordell-Weil theorem, cf. \cite{Serre}), meaning it may be either finite (e.g., torsion points alone) or infinite (if the rank is positive). 

\end{rem}

\subsection{Homogeneity of irrational minimal flat $3$-torus}
{It follows from Proposition~\ref{prop-homo} and Theorem~\ref{prop-irrational} that 
the cubic and quartic irrational minimal flat $3$-tori in spheres are obviously homogeneous. In this subsection, we show that this homogeneity also holds in the quadratic case.} 
\begin{theorem}
    The minimal isometric immersions of irrational flat $3$-tori into spheres are homogeneous. 
\end{theorem}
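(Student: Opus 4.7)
The plan is to stratify the argument according to the algebraic irrationality degree $d := [\mathbb{Q}(w):\mathbb{Q}]$, which by Theorem \ref{thm-3irra} satisfies $d \in \{2,3,4\}$.

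For the cubic case ($d=3$) and the quartic case ($d=4$), the argument is immediate from the results already established. Theorem \ref{prop-irrational} asserts that any minimal homothetic immersion of such a torus into spheres is unique up to congruence. On the other hand, Proposition \ref{prop-homo} guarantees that this flat torus also admits a homogeneous minimal isometric immersion into some sphere. Combining the two, the given immersion must be congruent to the homogeneous one, and hence is itself homogeneous.

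The substantive case is the quadratic one ($d=2$), where uniqueness need not hold. Let $x: T^3 \to \mathbb{S}^m$ be a minimal isometric immersion with matrix data $\{Q, Y\}$; the proof of Theorem \ref{thm-3irra} shows that the quadratic irrationality of $Q$ forces $\mathrm{rank}\{Y_jY_j^t : Y_j \in Y\} = 5$. Writing $x = (\Theta_1, \ldots, \Theta_N) A$ as in \eqref{eq-X}, one checks that $x$ is homogeneous if and only if the block matrix $AA^t$ is block-diagonal with diagonal blocks of the form $a_r I_2$, i.e., $A_{rs} = 0$ for all $r \neq s$. Since the off-diagonal blocks satisfy the homogeneous linear system \eqref{eq-eigen1}--\eqref{eq-iso2}, and the space of minimal immersions is parameterized by the positive semi-definite solutions of this system, it suffices to show that every such solution has vanishing off-diagonal blocks.

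The strategy would be to analyze the $\eta$-sets in $\mathcal{E} = \{\xi_r \pm \xi_s : 1 \leq r < s \leq N\}$. For a trivial $\eta$-set (a single pair), equations \eqref{eq-eigen1} and \eqref{eq-eigen2} already leave only a two-parameter family for $A_{rs}$ (rotation-like when $\eta = \xi_r + \xi_s$, reflection-like when $\eta = \xi_r - \xi_s$). Non-trivial $\eta$-sets, arising from integer relations $Y_{r_1} \pm Y_{s_1} = Y_{r_2} \pm Y_{s_2}$ among the $Y_j$'s, impose algebraic conditions on the entries of $Q$ via $Y_j^t Q Y_j = 1$; the crux is to show that in the quadratic irrational regime these conditions either collapse to rationality of $Q$ (a contradiction) or else provide enough coupling to force all off-diagonal blocks to vanish. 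Together with the positivity constraint $AA^t \succeq 0$ and Schur complement inequalities such as $A_{rs} A_{rs}^t \preceq a_r a_s I_2$, the remaining freedom should be eliminated.

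The main obstacle is precisely this combined linear-plus-positivity analysis in the quadratic case, because unlike the cubic and quartic cases, no prior uniqueness result is available. A feasible route is to exploit the structural rigidity coming from $\mathrm{rank}\{Y_jY_j^t\}=5$: this bounds $|Y|$ to a small number, which should allow a finite case analysis of $\eta$-sets. The hardest point is likely to show that any nontrivial $\eta$-relation among the $Y_j$'s is incompatible with $Q \in GL(3,\mathbb{Q}(w))\setminus GL(3,\mathbb{Q})$, since then only trivial $\eta$-sets remain and the surviving rotation- or reflection-like parameters in each $A_{rs}$ can be killed by the global positivity condition.
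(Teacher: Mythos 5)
Your reduction of the cubic and quartic cases is correct and is exactly the paper's argument (uniqueness from Theorem~\ref{prop-irrational} combined with Proposition~\ref{prop-homo}). The problem is the quadratic case, which is the actual content of the theorem: your proposal stops at announcing the key step rather than proving it. You write that ``the crux is to show'' that nontrivial $\eta$-relations are incompatible with irrationality of $Q$ and flag this as the ``hardest point,'' but you give no argument for it. That incompatibility \emph{is} the proof. The paper establishes it by a concrete computation: given an $\eta$-set containing at least three pairs, say $Y_1+Y_2=Y_3+Y_4=Y_5+Y_6=\eta$, one changes basis by $P=(\tfrac{\eta}{2},\,Y_1-\tfrac{\eta}{2},\,Y_3-\tfrac{\eta}{2})$; then every $\widetilde Y_i=P^{-1}Y_i$ has first coordinate $1$, the pairs force $y_1=-y_2$, $y_3=-y_4$, $y_5=-y_6$, and the constraints $\widetilde Y_i^t\widetilde Q\widetilde Y_i=1$ force $\widetilde Q=P^tQP$ into the form $\mathrm{diag}(a,(1-a)R)$ with $R$ rational and uniquely determined; consequently $W_{\widetilde Y}=\{\mathrm{diag}(t,(1-t)R)\}$ is a one-parameter family, and Theorem~\ref{thm-vari} says $a$ maximizes $t(1-t)^2\det R$, hence $a$ is rational, contradicting the irrationality of $Q$. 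Without an argument of this kind (or some substitute), your outline does not prove anything in the quadratic case.

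Two further points in your sketch are off. First, quadratic irrationality does not force $\rk\{Y_jY_j^t\}=5$: in the proof of Theorem~\ref{thm-3irra} the rank-$4$ case leads to a quartic equation with rational coefficients whose roots may well be quadratic, so rank $4$ with $\sharp(Y)=4$ must be handled separately (the paper reduces to rank $5$ because in the lower-rank cases every $\eta$-set has at most two pairs, and homogeneity then follows from Lemma~2.3 of \cite{LWX}). Second, your claim that every nontrivial $\eta$-relation contradicts irrationality is stronger than what is proved or needed, and your proposed finishing move --- killing the residual off-diagonal freedom ``by the global positivity condition'' and Schur complements --- is not the mechanism: the paper only excludes $\eta$-sets with three or more pairs, and for $\eta$-sets with at most two pairs homogeneity comes from the linear system \eqref{eq-eigen1}--\eqref{eq-iso2} via Lemma~2.3 of \cite{LWX}; indeed, when both $\xi_r+\xi_s$ and $\xi_r-\xi_s$ are realized by a single pair, those linear equations alone already force $A_{rs}=0$, with positivity playing no role.
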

\begin{proof}
     Assume that the matrix data associated to this minimal isometric immersion is $\{Q,Y\}$. 
     It needs only to prove for the case of $\rk\{Y_jY_j^t\,|\, Y_j\in Y\}=5$. 
    If there is a non-trivial $\eta$-set:
    \[
    Y_1+Y_2=Y_3+Y_4=Y_5+Y_6=\eta,
    \]
    then the plane $\{c_1Y_1+c_2Y_2+c_3Y_3|c_1+c_2+c_3=1\}$ intersects the ellipsoid $x^t Q x=1$ at an ellipse with center $\eta/2$. 
    
    Set $P\triangleq(\frac{\eta}{2},Y_1-\frac{\eta}{2},Y_3-\frac{\eta}{2})$, $\widetilde{Q}\triangleq P^tQP$, \text{and} $\widetilde{Y}_i\triangleq P^{-1}Y_i ~(1\leq i\leq 6)$. Then we have 
    \[\widetilde{Y}_i^t\widetilde{Q}\widetilde{Y}_i=1,\quad \widetilde{Y}_i=\begin{pmatrix}
        1\\ y_i
    \end{pmatrix},\]
    in which $y_i\in\mathbb{Q}^2$ for $1\leq i\leq 6$, and 
    \[
    y_1=-y_2=\begin{pmatrix}
        1\\0
    \end{pmatrix},\quad y_3=-y_4=\begin{pmatrix}
        0\\1
    \end{pmatrix},\quad y_5=-y_6.
    \]
    Note that the matrix $R\in \mathrm{Sym}_2$ solving $y_i^tRy_i=1$ for all $1\leq i\leq 6$ is uniquely determined and must be rational. Moreover, 
    there exits a number $a\in \mathbb{R}$ such that 
    \[
    \widetilde{Q}=\begin{pmatrix}
        a&\\&(1-a)R
    \end{pmatrix}.
    \]
    By assumption, $Q$ is irrational. It follows that $\widetilde{Q}$ is irrational, and hence $a$ is irrational. This implies that other $\widetilde{Y}_j (j>6)$ if existing, must be of the form $\begin{pmatrix}
        1\\y_j
    \end{pmatrix}$ with $y_j^tRy_j=1$. Consequently, we have 
    $$W_{\widetilde{Y}}=\left\{\begin{pmatrix}
        t&\\&(1-t)R
    \end{pmatrix} \,\bigg{|}\, t\in \mathbb{R}\right\}.$$

    By Theorem~\ref{thm-vari}, we derive that 
    $a$ achieves the maximum of determinant function $t(1-t)^2\mathrm{det}(R)$, which is obvious 
    rational. This contradiction implies that the non-trivial $\eta$-set consists of exactly $2$ pairs and thus the immersion is homogeneous by Lemma 2.3 in \cite{LWX}.
\end{proof}
\subsection{{Examples of non-homogeneous minimal flat $3$-tori in spheres}}\label{sec-defor}
{In this subsection, we show there do exist non-homogeneous minimal isometric  immersions of rational flat $3$-tori with $\rk\{Y_i^tY_i\mid 1\leq i\leq \sharp(Y)\}=5$. Moreover, by minimal products with the Clifford $T^{n-3}$ \cite{CH,Tang-Z,Xin}, one can show there exists non-homogeneous minimal flat $n$-tori in spheres for every $n\geq3$.} 

 From \eqref{eq-X}, it is easy to see that a minimal 
flat $n$-tori is homogeneous {if and only if the matrix $A$ is diagonal up to multiplying an orthogonal matrix on the right, or equivalently,} the symmetric matrix $AA^t$ is diagonal. 
\begin{example}
    Let $\{p,q,r\}$ be a primitive Pythagorean triple, i.e., they satisfy 
    $$p^2+q^2=r^2,~~~0<p<q<r,~~~(p,q,r)=1.$$
    In addition, we require that $r$ can not be the hypothenuse of another primitive Pythagorean triple. 
    It is easy to check that 
\[
\xi_1=\frac{\sqrt{2}}{\sqrt{3}}\begin{pmatrix}1/\sqrt{2}\\1\\0\end{pmatrix},\quad\xi_2=\frac{\sqrt{2}}{\sqrt{3}}\begin{pmatrix}1/\sqrt{2}\\-1\\0\end{pmatrix},\quad\xi_3=\frac{\sqrt{2}}{\sqrt{3}}\begin{pmatrix}1/\sqrt{2}\\0\\1\end{pmatrix},\quad\xi_4=\frac{\sqrt{2}}{\sqrt{3}}\begin{pmatrix}1/\sqrt{2}\\0\\-1\end{pmatrix},
\]
\[
\xi_5=\frac{\sqrt{2}}{\sqrt{3}}\begin{pmatrix}1/\sqrt{2}\\p/r\\q/r\end{pmatrix},\quad\xi_6=\frac{\sqrt{2}}{\sqrt{3}}\begin{pmatrix}1/\sqrt{2}\\-p/r\\-q/r\end{pmatrix},\quad\xi_7=\frac{\sqrt{2}}{\sqrt{3}}\begin{pmatrix}1/\sqrt{2}\\-q/r\\p/r\end{pmatrix},\quad\xi_8=\frac{\sqrt{2}}{\sqrt{3}}\begin{pmatrix}1/\sqrt{2}\\q/r\\-p/r\end{pmatrix},
\]
\[
\xi_9=\frac{\sqrt{2}}{\sqrt{3}}\begin{pmatrix}1/\sqrt{2}\\p/r\\-q/r\end{pmatrix},\quad\xi_{10}=\frac{\sqrt{2}}{\sqrt{3}}\begin{pmatrix}1/\sqrt{2}\\-p/r\\q/r\end{pmatrix},\quad\xi_{11}=\frac{\sqrt{2}}{\sqrt{3}}\begin{pmatrix}1/\sqrt{2}\\q/r\\p/r\end{pmatrix},\quad\xi_{12}=\frac{\sqrt{2}}{\sqrt{3}}\begin{pmatrix}1/\sqrt{2}\\-q/r\\-p/r\end{pmatrix},
\]
are all the vectors (up to $\pm 1$) of norm 
$1$ in the lattice $\Lambda_3^*$ generated by  
\[
\begin{pmatrix}  1/\sqrt{3}\\0\\0\end{pmatrix},\quad
\frac{\sqrt{2}}{\sqrt{3}}\begin{pmatrix}  0\\1/r\\0\end{pmatrix},\quad
\frac{\sqrt{2}}{\sqrt{3}}\begin{pmatrix}  0\\0\\1/r\end{pmatrix}.
\]
\end{example}
Obviously, we have a $\eta$-set:
\[
\eta=\xi_1+\xi_2=\xi_3+\xi_4=\cdots=\xi_{11}+\xi_{12}.
\]
Note that the pair $(\xi_{2k-1},-\xi_{2k})$ is a trivial $\eta$-set containing only one pair. So it follows from \eqref{eq-eigen1} and \eqref{eq-eigen2} that $$A_{ij}^{11}+A_{ij}^{22}=A_{ij}^{12}-A_{ij}^{21}=0,\quad \text{for } i=2k-1,j=2k, 1\leq k\leq 6.$$

Let  
\[
AA^t=\begin{pmatrix}
    B_1&&&\\&B_2&&\\&&\ddots&\\&&&B_6
\end{pmatrix},\quad 
B_i=\begin{pmatrix}
    a_{2i-1}I_2&C_i\\C_i^t&a_{2i}I_2
\end{pmatrix},\quad
C_i=\begin{pmatrix}
    \alpha_i&\beta_i\\\beta_i&-\alpha_i
\end{pmatrix},
\]
in which 
\[
a_5+a_6+a_{11}+a_{12}=a_7+a_8+a_9+a_{10},
\]
\[
a_1+\frac{p(p+r)}{2r^2}(a_5+a_9)+\frac{p(p-r)}{2r^2}(a_6+a_{10})+\frac{q(q+r)}{2r^2}(a_8+a_{11})+\frac{q(q-r)}{2r^2}(a_7+a_{12})=\frac{1}{4},
\]
\[
a_3+\frac{q(q+r)}{2r^2}(a_5+a_{10})+\frac{q(q-r)}{2r^2}(a_6+a_9)+\frac{p(p+r)}{2r^2}(a_7+a_{11})+\frac{p(p-r)}{2r^2}(a_8+a_{12})=\frac{1}{4},
\]
\[
a_2+\frac{p(p-r)}{2r^2}(a_5+a_{9})+\frac{p(p+r)}{2r^2}(a_6+a_{10})+\frac{q(q-r)}{2r^2}(a_8+a_{11})+\frac{q(q+r)}{2r^2}(a_7+a_{12})=\frac{1}{4},
\]
\[
a_4+\frac{q(q-r)}{2r^2}(a_5+a_{10})+\frac{q(q+r)}{2r^2}(a_6+a_9)+\frac{p(p-r)}{2r^2}(a_7+a_{11})+\frac{p(p+r)}{2r^2}(a_8+a_{12})=\frac{1}{4},
\]
so that the diagonal entries $a_i>0$ satisfy \eqref{eq-euta};
\[
\alpha_1=-R_1\left(1+\frac{p^2-q^2}{r^2}(\cos^2\phi_1-\cos^2\psi_1)\right),\quad
\alpha_2=-R_1\left(1+\frac{p^2-q^2}{r^2}(\cos^2\psi_1-\cos^2\phi_1)\right),
\]
\[
\alpha_3=R_1\cos^2\phi_1,\quad \alpha_4=R_1\cos^2\psi_1,\quad \alpha_5=R_1\sin^2\psi_1,\quad \alpha_6=R_1\sin^2\phi_1,
\]
\[
\beta_1=-R_2\left(1+\frac{p^2-q^2}{r^2}(\cos^2\phi_2-\cos^2\psi_2)\right),\quad
\beta_2=-R_2\left(1+\frac{p^2-q^2}{r^2}(\cos^2\psi_2-\cos^2\phi_2)\right),
\]
\[
\beta_3=R_2\cos^2\phi_2,\quad \beta_4=R_2\cos^2\psi_2,\quad \beta_5=R_2\sin^2\psi_2,\quad \beta_6=R_2\sin^2\phi_2,
\]
for some constant $R_i,\phi_i$ and $\psi_i$, so that $\alpha_i$ and $\beta_i$ respectively satisfy the conditions \eqref{eq-eigen1},\eqref{eq-iso1} and \eqref{eq-eigen2},\eqref{eq-iso2}. Moreover, in order to ensure $A$ is well-defined, each $B_i$ has to be semi-positive. That is,  $\alpha_i^2+\beta_i^2$, the eigenvalue of $C_i^tC_i$ can not be greater than $a_{2i-1}a_{2i}$. Then we get a family of non-congruent, minimal isometric immersion of flat torus $x:T^3=\mathbb{R}^3/\Lambda_3\to \mathbb{S}^{23}$ as in \eqref{eq-X}, where $\Lambda_3$ is the dual lattice of $\Lambda_3^*$. 

The parameter space of such immersions is of dimension $7+6=13$. Since all the parameters $\alpha_i$ and $\beta_i$ can deform continuously to $0$, each of these minimal isometric immersions can deform into homogeneous cases, which is still a family with $7$ parameters. Furthermore, any of these homogeneous cases can deform into $a_{2i}=a_{2i+1}=0$ for all $i>2$, ending up with a minimal immersion in $\mathbb{S}^7$. 

Note that there exist other $\eta$-sets, $\xi_1-\xi_3=\xi_4-\xi_2$ for example. However, they have only trivial solutions for \eqref{eq-iso1} and \eqref{eq-iso2}, so the aforementioned $A$ describes all the minimal isometric immersions defined by the given $\{\xi_i\}$. 

The matrix data $\{Q,Y\}$ of this example is 
\[
Q=\frac{1}{3}\begin{pmatrix}
    1&&\\&\frac{2}{r^2}&\\&&\frac{2}{r^2}
\end{pmatrix},\quad
Y=\left(\begin{array}{cccccccccccc}
    1&1&1&1&1&1&1&1&1&1&1&1\\r&-r&0&0&p&-p&-q&q&p&-p&q&-q\\0&0&r&-r&q&-q&p&-p&-q&q&p&-p
\end{array}\right).
\]

It is well known that for any given primitive Pythagorean triple $\{p,q,r\}$, $r$ is odd, and exactly one of $p$ and $q$ is even, not both. 
    Taking $\mathbf{u}=(\frac{1}{2},\frac{1}{2},\frac{1}{2})^t$ in \eqref{eq-YU}, we get that all $\langle Y_i,\mathbf{u}\rangle$ are integers. 
    Note that on the torus $\mathbb{R}^3/\Lambda_3$, the point $\mathbf{u}$ is distinct with $(0,0,0)^t$, while their images of $x$ coincide. This means these minimal isometric immersions are not embedded. It is an interesting question that if the embedding implies homogeneity. 

\section{Isometric deformation and irrationality degree of minimal flat $n$-tori}\label{sec-deform}
In this section, we first address the minimal target dimension problem for minimally immersed flat $n$-tori through isometric deformation analysis. 
Then,  we establish an upper bound for 
the irrationality degrees of minimal flat $n$-tori. 
\subsection{The isometric deformation of minimal flat $n$-tori in spheres}
\begin{theorem}\label{thm-deformation}
    Let $x: T^n\triangleq \mathbb{R}^n/\Lambda_n\rightarrow \mathbb{S}^m$ be a minimal flat $n$-torus. Then $x$ can be deformed by a smooth homotopy of minimal isometric immersions into a homogeneous immersion in $\mathbb{S}^{p}$ with $p<n^2+n$.  
\end{theorem}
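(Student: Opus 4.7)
The plan is to exploit the block structure of $AA^t$ developed in the setup and perform two successive homotopies: first to remove the off-diagonal blocks (achieving homogeneity), and then to reduce the number of lattice frequencies used (reducing the target dimension).

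\textbf{Step 1 (Deformation to homogeneity).} Write $AA^t = D + E$, where $D = \mathrm{diag}(a_1 I_2,\ldots,a_N I_2)$ is the block-diagonal part (recall $A_{rr}=a_r I_2$) and $E$ collects the off-diagonal blocks $A_{rs}$, $r\neq s$. The equations \eqref{eq-eigen1}--\eqref{eq-iso2} are homogeneous linear equations in the entries of the off-diagonal blocks, while \eqref{eq-euta} constrains only the diagonal coefficients through $D$. Consider the convex combination $M_t := (1-t)D + t(D+E) = D + tE$ for $t\in[0,1]$. Since both $D$ (the block-diagonal of a PSD matrix is PSD) and $AA^t=D+E$ are positive semi-definite, each $M_t$ is PSD, and by the homogeneity of \eqref{eq-eigen1}--\eqref{eq-iso2} continues to satisfy all five constraints. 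Choosing $A_t$ with $A_tA_t^t=M_t$ (by a continuous factorization such as Cholesky) gives a smooth homotopy of minimal isometric immersions from $x$ at $t=1$ to a homogeneous minimal immersion $\tilde x$ at $t=0$ using the same lattice vectors $\xi_1,\ldots,\xi_N$.

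\textbf{Step 2 (Reduction of frequencies).} After Step 1, $\tilde x$ is determined by coefficients $a_1,\ldots,a_N\geq 0$ satisfying
\[
\sum_{r=1}^{N} a_r\,\xi_r\xi_r^t \;=\; \tfrac{1}{n}I_n.
\]
Discarding any $r$ with $a_r=0$ (whose contribution to $\tilde x$ vanishes) we may assume all $a_r>0$. If $N>\frac{n(n+1)}{2}=\dim\mathrm{Sym}_n$, then $\{\xi_1\xi_1^t,\ldots,\xi_N\xi_N^t\}$ is linearly dependent, so there is a nonzero $(c_1,\ldots,c_N)$ with $\sum_r c_r\,\xi_r\xi_r^t=0$. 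Then $(a_r+sc_r)$ preserves \eqref{eq-euta} for every $s$, and increasing $|s|$ to the smallest value $s_0$ at which some $a_r+s_0c_r$ becomes zero yields a homotopy of homogeneous minimal isometric immersions terminating at one that uses strictly fewer frequencies (the vanishing pair of components drops out into a lower-dimensional equatorial $\mathbb{S}^{2N-3}\subset\mathbb{S}^{2N-1}$). Iterating, we reach a configuration with $N'\leq\tfrac{n(n+1)}{2}$ frequencies whose symmetric squares are linearly independent. The target sphere then has dimension
\[
p \;=\; 2N'-1 \;\leq\; n(n+1)-1 \;=\; n^2+n-1 \;<\; n^2+n,
\]
and concatenating the two homotopies gives the desired smooth deformation of $x$ into a homogeneous minimal isometric immersion in $\mathbb{S}^p\subset\mathbb{S}^m$.

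The main technical subtlety I expect is keeping the homotopy genuinely smooth as individual coefficients $a_r$ reach zero in Step 2, since $\sqrt{a_r}$ is only $C^0$ at such moments. This can be handled either by reparametrizing with $a_r=b_r^2$ (allowing $b_r$ to pass smoothly through zero while $A_t$ remains smooth), or by decomposing the deformation into finitely many smooth pieces across the degenerate times and reparametrizing each piece to have infinite-order contact at its endpoints; either way the inductive step is preserved and the final target dimension bound is unaffected.
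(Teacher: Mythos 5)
Your argument is correct, and it reaches the same bound $N'\le \tfrac{n(n+1)}{2}$, hence $p\le n^2+n-1$, as the paper. Step 1 is essentially identical to the paper's: the paper observes that \eqref{eq-eigen1}, \eqref{eq-eigen2}, \eqref{eq-iso1}, \eqref{eq-iso2} form a homogeneous linear system in the off-diagonal blocks, so scaling them to zero (your $D+tE$) deforms $x$ into a homogeneous immersion; your remark that $D$ is the block-diagonal compression of a PSD matrix and that positive semi-definiteness is preserved along the segment is exactly what makes this legitimate. Step 2 is where you take a genuinely different (more hands-on) route: the paper passes to the matrix data $\{Q,Y\}$, decomposes the polytope $C_Y$ (which lies in the affine hyperplane $\langle\,\cdot\,,Q\rangle=1$) into simplices with at most $\tfrac{n(n+1)}{2}$ vertices, locates $\tfrac{Q^{-1}}{n}$ in the relative interior of one simplex $C_{\tilde Y}$, and invokes Theorem~\ref{thm-vari} to see that $\{Q,\tilde Y\}$ still yields a minimal isometric immersion, the deformation then coming from convexity of the solution set; you instead run the constructive Carath\'eodory reduction directly on the coefficients of \eqref{eq-euta}, sliding along a kernel relation $\sum_r c_r\xi_r\xi_r^t=0$ until a coefficient vanishes and iterating. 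The two reductions are the same combinatorial fact in different clothing, but yours avoids any appeal to the variational characterization, exhibits the homotopy explicitly (and correctly notes that $\sum_r a_r=1$ is automatic from the trace of \eqref{eq-euta}), and — unlike the paper, which is silent on this — addresses the only real analytic wrinkle, namely that $\sqrt{a_r(t)}$ fails to be smooth where a coefficient vanishes; your fix by reparametrizing time with infinite-order contact at the degenerate instants (rather than the $a_r=b_r^2$ substitution alone, which does not by itself restore smoothness of $\sqrt{\,\cdot\,}$ of a simply vanishing linear function) is the one that works.
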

\begin{proof}
    We denote by $\{Q, Y\}$ the matrix data of $x$, where $Q$ is the Gram matrix of $\Lambda_n$ with respect to a chosen generator, and $Y=\{Y_1, \cdots, Y_N\}$ is the integer vectors  appearing in the minimal immersion $x$. First, the immersion $x$ can deform into a homogeneous immersion into $\mathbb{S}^{2N-1}$ (may not being linearly full). Then from Theorem~\ref{thm-vari}, $\frac{Q^{-1}}{n}$ lies in $C_Y$ as the maximal point of the determinant function restricted on $C_Y$, where $C_Y$ is a convex polytope decomposed (not necessarily uniquely) as a union of some simplices of dimension at most $\frac{n(n+1)}{2}$. Therefore, $\frac{Q^{-1}}{n}$ must lie in the interior of some simplex of dimension at most $\frac{n(n+1)}{2}$, which means there exists a subset $\tilde{Y}\subset Y$ whose cardinality at most $\frac{n(n+1)}{2}$ such that $\frac{Q^{-1}}{n}\in \overset{\circ}{C_{\tilde{Y}}}$. As a maximum point of $\det$ on $C_Y$, it automatically maximizes $\det$ on  $C_{\tilde{Y}}$. It follows from 
   Theorem~\ref{thm-vari} that the matrix data $\{Q, \tilde{Y}\}$ provides a homogeneous minimal and isometric immersion of $T^n$ in $\mathbb{S}^{p}$. Such immersion is obviously a special solution of \eqref{eq:flat} and hence this completes the proof.
\end{proof}
Combining Theorem \ref{thm-deformation} with Proposition~\ref{eq-infinite}, we obtain the following corollary. 
{\begin{corollary}
For every rational flat $n$-torus $T^n$, there exist infinitely many positive integers $k$ such that $T^n$ admits a minimal immersion into  $\mathbb{S}^{n^2+n-1}$ realized by the $k$-th eigenfunctions.      
\end{corollary}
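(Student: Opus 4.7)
The plan is to combine Proposition~\ref{eq-infinite} with Theorem~\ref{thm-deformation} directly, with the only substantive verification being that the deformation produced by Theorem~\ref{thm-deformation} preserves the eigenvalue, and that the dimension bound is sharp enough to land in $\mathbb{S}^{n^2+n-1}$.

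First, I would invoke Proposition~\ref{eq-infinite} to extract an infinite set $\mathcal{K}\subset\mathbb{Z}^+$ together with, for each $k\in\mathcal{K}$, a minimal isometric immersion $x_k:T^n\to\mathbb{S}^{m_k}$ whose coordinate functions are eigenfunctions of the $k$-th eigenvalue $\lambda_k$ of the Laplacian on $T^n$. Writing $\{Q_k,Y_k\}$ for its matrix data, the set $Y_k$ consists of integer labels $Y_j$ for which the dual-lattice vector $\xi_j=(\eta_1,\dots,\eta_n)Y_j$ has the common length $\sqrt{\lambda_k}/(2\pi)$; in particular every component of $x_k$ is attached to the single eigenvalue $\lambda_k$.

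Next, I would feed each $x_k$ into Theorem~\ref{thm-deformation}. Unpacking its proof, one first homogenizes $x_k$ using $\{Q_k,Y_k\}$ and then selects a subset $\widetilde{Y}_k\subset Y_k$ of cardinality at most $n(n+1)/2$ so that $Q_k^{-1}/n$ lies in the relative interior of $C_{\widetilde{Y}_k}$ and remains the maximizer of $\det$ there. The cardinality bound here is the point where one must be slightly careful: the vectors $\{Y_jY_j^t:Y_j\in Y_k\}$ all sit on the affine hyperplane $\{P:\langle P,Q_k\rangle=1\}\subset\mathrm{Sym}_n$, which has affine dimension $n(n+1)/2-1$, so any simplex from a triangulation of $C_{Y_k}$ has at most $n(n+1)/2$ vertices. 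By the converse in Theorem~\ref{thm-vari}, the data $\{Q_k,\widetilde{Y}_k\}$ then determines a homogeneous minimal isometric immersion $\tilde{x}_k:T^n\to\mathbb{S}^{p_k}$ with
\[
p_k=2|\widetilde{Y}_k|-1\le n(n+1)-1=n^2+n-1,
\]
and the passage from $x_k$ to $\tilde{x}_k$ is carried out by a smooth homotopy of coefficients $(c_j^2)$ inside the linear system \eqref{eq:flat}, which keeps every intermediate map minimal and isometric.

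The remaining check, and the only genuinely conceptual point, is that this deformation is eigenvalue-preserving. Since $\widetilde{Y}_k\subset Y_k$, every integer label used by $\tilde{x}_k$ still corresponds to a lattice vector of the common length $\sqrt{\lambda_k}/(2\pi)$, so the coordinate functions of $\tilde{x}_k$ are still eigenfunctions of $\lambda_k$, i.e., $k$-th eigenfunctions. Composing with a totally geodesic inclusion $\mathbb{S}^{p_k}\hookrightarrow\mathbb{S}^{n^2+n-1}$ produces, for every $k\in\mathcal{K}$, a minimal immersion $T^n\to\mathbb{S}^{n^2+n-1}$ realized by the $k$-th eigenfunctions, and since $\mathcal{K}$ is infinite the corollary follows. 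I do not expect a serious obstacle: the proof is purely a synthesis, and the only subtlety to document is the affine-hyperplane refinement sharpening $|\widetilde{Y}_k|\le n(n+1)/2$ (yielding $p_k\le n^2+n-1$ rather than the naive $n^2+n$).
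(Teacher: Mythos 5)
Your proposal is correct and follows exactly the paper's route: the corollary is stated there as an immediate combination of Proposition~\ref{eq-infinite} with Theorem~\ref{thm-deformation}, which is precisely your synthesis. Your two explicit verifications (that passing to $\widetilde{Y}_k\subset Y_k$ keeps all lattice vectors of the same length, hence the same eigenvalue, and that $C_{Y_k}$ lies in the affine hyperplane $\langle P,Q_k\rangle=1$ so the simplices have at most $n(n+1)/2$ vertices, giving $p_k\le n^2+n-1$) are exactly the details the paper leaves implicit.
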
}
\begin{rem}\
\begin{enumerate}
    \item Combining Theorem~\ref{thm-rational} with Theorem~\ref{thm-deformation} we obtain that up to a dilation,  every flat rational $n$-torus admits a minimal and isometric immersion in $\mathbb{S}^{n^2+n-1}$.  
    \item
In particular, by Theorem~\ref{thm-deformation}, every minimal flat $2$-torus admits an isometric and minimal immersion in $\mathbb{S}^5$. See below for an illustration by some special examples. {Moreover, for any such flat $2$-torus, there exist infinitely many integers $k$ so that the minimal immersions into $\mathbb{S}^5$ can be realized by the $k$-th eigenfunctions. 
}  
\end{enumerate} 
\end{rem}
\begin{example}
By Bryant's theorem, a flat torus $T_{a,b}=\mathbb{R}^2/2\pi(\mathbb{Z}\oplus(a+ib)\mathbb{Z})$ can be immersed into $S^n$ by flat minimal isometric immersions if and only if both $a$ and $b^2$ are rational numbers. 
It is well-known that the eigenfunctions and eigenvalues of $T_{a,b}$  are
\[\cos(pu+\frac{q-pa}{b}v),~~\sin(pu+\frac{q-pa}{b}v), ~~\lambda_{p,q}=(p^2+\frac{(q-pa)^2}{b^2}).\]
Consider a special family of $2$-tori with 
\[a=\frac{m}{n}<\frac{1}{2}, ~~b=\sqrt{1-a^2}=\frac{\sqrt{n^2-m^2}}{n}.\] 
For such $2$-tori we have
\[\lambda_{p,q}=\frac{(p^2+q^2)n^2-2pqmn}{n^2-m^2}.\]
One can check that when $\{p,q\}=\{n,0\}$ and $\{p,q\}=\{n,2m\}$, we always have 
\[\lambda_{p,q}=\frac{n^4}{n^2-m^2}\]
with  corresponding eigenfunctions
\[e^{\pm i\frac{n^2v}{\sqrt{n^2-m^2}}}, e^{\pm i\left(nu+\frac{mnv}{\sqrt{n^2-m^2}}\right)}, e^{\pm i\left(nu-\frac{mnv}{\sqrt{n^2-m^2}}\right)}, e^{\pm i\left(2mu+\frac{(n^2-2m^2)v}{\sqrt{n^2-m^2}}\right)}.\]
Then we see that the immersion $f:T_{a,b}\rightarrow S^7\subset \mathbb{R}^8=\mathbb{C}^4$
\[f:=\left(r_1e^{i\frac{n^2v}{\sqrt{n^2-m^2}}}, r_2e^{i\left(nu+\frac{mnv}{\sqrt{n^2-m^2}}\right)}, r_3e^{i\left(nu-\frac{mnv}{\sqrt{n^2-m^2}}\right)}, r_4e^{ i\left(2mu+\frac{(n^2-2m^2)v}{\sqrt{n^2-m^2}}\right)}\right)\]
is a 
minimal immersion if and only if 
\[\left\{\begin{split}
&    r_1^2+r_2^2   + r_3^2+r_4^2=1,\\
&    n^4r_1^2-n^2(n^2-2m^2)(r_2^2+r_3^2)+(n^4-8m^2n^2+8m^4)r_4^2=0,\\
&  n^2(r_2^2-r_3^2)+2(n^2-2m^2)r_4^2=0.\\
\end{split}\right.\]
The second and third equation can be re-written as
\[\left\{\begin{split}
&   r_1^2-(b^2-a^2)(r_2^2+r_3^2)+(1-8a^2+8a^4)r_4^2=0,\\
&  r_3^2-r_2^2=2(b^2-a^2)r_4^2.\\
\end{split}\right.\]
Setting $r_4=\rho\geq0$, we obtain the solutions to the above equations are
\[\left\{\begin{split}
&    r_1^2=\frac{b^2-a^2}{2b^2} -(b^2-3a^2) \rho^2,\\
&    r_2^2=\frac{1}{4b^2}-\rho^2,\\
& r_3^2=\frac{1}{4b^2} +(b^2-3a^2) \rho^2,\\
\end{split}\right.\hbox{ with }0\leq\rho\leq \frac{1}{2b}.\]
This provides a family of minimal flat $2$-tori in $S^7$, which is full in $S^7$ when $0<
\rho<\frac{1}{2b}$, and is contained in some $S^5\subset S^7$ when $\rho=0$ or $\frac{1}{2b}$.

Note that when $n=2k$, one can choose $\{p,q\}=\{k,0\}$ and $\{p,q\}=\{k,m\}$ with $\lambda_{p,q}=\frac{k^4}{4k^2-m^2}$, to construct a family of flat minimal tori in $S^7$. We leave it to interested readers.
\end{example}
\begin{rem}
For a minimal flat $n$-torus $x: T^n\rightarrow \mathbb{S}^m$, if $m>n(n+1)-1$, then the minimal immersion of $T^n$ in $\mathbb{S}^{n(n+1)-1}$, as described in Theorem~\ref{thm-deformation}, is situated on the boundary of the moduli space of minimal isometric immersions of $T^n$. 
\end{rem}
\begin{rem}
If $\dim{\mathcal{M}(T^n)}>0$, then from the convexity of $\mathcal{M}(T^n)$, any two minimal isometric immersions of $T^n$ can be deformed to each other by a smooth homotopy of minimal isometric immersions.    
\end{rem}

\subsection{The algebraic irrationality of minimal flat $n$-tori}

Employing methods analogous to those {developed in Subsection~\ref{subsec-3tori}}, 
we derive an upper bound estimate for the algebraic irrationality of minimal flat $n$-tori. The following algebraic lemma will be used. 

\begin{lemma}\label{lem-degree}
    Let $P_1, P_2, \cdots, P_m \in \mathbb{Q}[x_1, x_2,\ldots, x_m]$ be 
    polynomials with respective degrees $d_1, d_2, \cdots, d_m$, 
    and $(\alpha_1, \alpha_2, \cdots, \alpha_m)$ be a common zero point of these polynomials. If $P_1, P_2, \cdots, P_m$ are coprime, then 
    the field extension degree $[\mathbb{Q}(\alpha_1, \alpha_2, \cdots, \alpha_m):\mathbb{Q}]$ satisfies 
    $$[\mathbb{Q}(\alpha_1, \alpha_2, \cdots, \alpha_m):\mathbb{Q}]\leq d_1 d_2\cdots d_m.$$
\end{lemma}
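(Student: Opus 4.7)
The plan is to combine an affine Bezout-type bound for the number of common zeros with a Galois-theoretic interpretation of the field extension degree.

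First, I would show that the coprimality hypothesis implies that the common zero set $V=V(P_1,\ldots,P_m)\subset\overline{\mathbb{Q}}^m$ is finite and has cardinality at most $d_1d_2\cdots d_m$. Writing $Z_i\subset\overline{\mathbb{Q}}^m$ for the hypersurface defined by $P_i$, the assumption that no two polynomials share a common non-constant factor lets one inductively intersect: at each step, $Z_1\cap\cdots\cap Z_j$ has pure codimension $j$, and by the affine Bezout inequality (obtained by homogenizing to $\mathbb{P}^m$ and applying the projective Bezout theorem), the degree of the resulting variety is bounded by $d_1\cdots d_j$. At $j=m$ the intersection is zero-dimensional and its number of points is at most $d_1\cdots d_m$.

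Next, I would invoke the Galois action. Let $K=\mathbb{Q}(\alpha_1,\ldots,\alpha_m)$ and let $G=\mathrm{Gal}(\overline{\mathbb{Q}}/\mathbb{Q})$ act coordinatewise on $\overline{\mathbb{Q}}^m$. Since the $P_i$ lie in $\mathbb{Q}[x_1,\ldots,x_m]$, the set $V$ is $G$-stable; hence the orbit $G\cdot(\alpha_1,\ldots,\alpha_m)$ is contained in $V$ and has size at most $|V|\leq d_1\cdots d_m$. On the other hand, the stabilizer of $(\alpha_1,\ldots,\alpha_m)$ is precisely $\mathrm{Gal}(\overline{\mathbb{Q}}/K)$, so the orbit has cardinality $[K:\mathbb{Q}]$ (using separability, automatic in characteristic zero). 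Combining the two gives $[K:\mathbb{Q}]\leq d_1d_2\cdots d_m$.

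The principal technical obstacle will be pinning down exactly what the coprimality hypothesis guarantees: the clean bound $|V|\leq d_1\cdots d_m$ requires the system to cut out a zero-dimensional variety, which follows once one knows that the projective closures $\overline{Z_i}\subset\mathbb{P}^m$ intersect properly. If the authors mean by ``coprime'' that no positive-dimensional component is common to all $Z_i$ (equivalently, the ideal $(P_1,\ldots,P_m)$ is zero-dimensional after saturating), the Bezout argument goes through directly; otherwise one may need to first replace $V$ by its zero-dimensional part, which still contains the Galois orbit of $(\alpha_1,\ldots,\alpha_m)$ provided that point is an isolated common zero, and the orbit bound is unaffected.
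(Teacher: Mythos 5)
Your route is genuinely different from the paper's. The paper homogenizes the $P_i$, views $p=[\alpha_1:\cdots:\alpha_m:1]$ as a closed point of the projective common zero set $V$, and directly quotes the refined B\'ezout theorem over a non-algebraically closed field (Fulton, Prop.~8.4; see also McKean), whose inequality $l(\mathcal{O}_p(V))\,[\mathbb{Q}(\alpha_1,\ldots,\alpha_m):\mathbb{Q}]\le d_1\cdots d_m$ already carries the residue-field degree, so no counting of geometric points and no Galois action are needed. Your second step is sound and is a nice elementary substitute for that input: the $\mathrm{Gal}(\overline{\mathbb{Q}}/\mathbb{Q})$-orbit of the tuple has exactly $[K:\mathbb{Q}]$ elements (characteristic zero, and the tuple generates $K$) and lies inside the common zero locus, so a bound on the number of isolated common zeros does yield the lemma.

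The genuine gap is in your first step. Coprimality of $P_1,\ldots,P_m$ does not force $Z_1\cap\cdots\cap Z_j$ to have pure codimension $j$ at each stage once $m\ge 3$, nor the common zero set to be finite: take $P_1=x_1$, $P_2=x_2$, $P_3=x_1+x_2$ in $\mathbb{Q}[x_1,x_2,x_3]$; these are pairwise coprime with no common factor, yet $V(P_1,P_2,P_3)$ is the entire $x_3$-axis, so $|V|\le d_1d_2d_3$ fails, and a point $(0,0,\alpha_3)$ with $\alpha_3$ of large degree even violates the stated conclusion. So the inductive ``proper intersection'' claim is false as written, and coprimality alone cannot carry it. You do flag this and propose the fix of passing to the zero-dimensional part under the extra assumption that $(\alpha_1,\ldots,\alpha_m)$ is an isolated common zero; with that assumption your Galois-orbit argument closes (the orbit of an isolated zero consists of isolated zeros, and the affine B\'ezout bound applies to them), and, to be fair, the paper's own one-line proof implicitly needs the same isolatedness for the cited B\'ezout inequality to apply to $p$. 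But as presented, the unconditional claim $|V|\le d_1\cdots d_m$ is the step that breaks, and the conditional repair changes the hypotheses of the lemma rather than proving it as stated.
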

\begin{proof}For each $1\leq i\leq m$, let $\widetilde{P}_i\in \mathbb{Q}[x_1, x_2,\cdots, x_{m}, x_{m+1}]$ be the homogenization of the polynomial $P_i$. We denoted by $V$ the set of common zeros of 
$\widetilde{P}_1, \widetilde{P}_2, \cdots, \widetilde{P}_m$ in $\mathbb{C}P^{m-1}$. 

Note that $V$ is projective algebraic and  $p\triangleq[\alpha_1: \alpha_2: \cdots,: \alpha_m: 1]$ is a closed point in 
$V$. It follows from the B\'ezout theorem over non-algebraic closed fied \cite[Proposition 8.4, Page 145]{Fulton} (see also \cite{McKean}) that 
$$l(\mathcal{O}_p(V))\,[\mathbb{Q}(\alpha_1, \alpha_2, \cdots, \alpha_m):\mathbb{Q}]\leq d_1 d_2\cdots d_m,$$
where $l(\mathcal{O}_p(V))$ denotes the length of the local ring $\mathcal{O}_p(V)$ of $V$ at $p$. 
\end{proof}
\begin{theorem}\label{thm-degree}
Suppose $T^n=\mathbb{R}^n/\Lambda_n$ is a flat $n$-torus that admits a minimal isometric immersion into some sphere, equipped with the corresponding matrix data $\{Q, Y\}$. Let $K/\mathbb{Q}$ be the minimal field extension containing all entries of $Q$. Then 
the extension degree satisfies  
$$[K:\mathbb{Q}]\leq \min\{
(n-1)^{k-1}, (n-1)^s\},$$ 
where $k=\rk\{Y_jY_j^t\,|\, Y_j\in Y\},~s=\frac{n(n+1)}{2}-k$.
\end{theorem}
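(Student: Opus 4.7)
The plan is to realize the matrix $Q$ (or its inverse) as the unique critical point of a rational polynomial system derived from the variational characterization of Theorem~\ref{thm-vari}, and then invoke Lemma~\ref{lem-degree} to bound the extension degree. Since both asserted bounds $(n-1)^s$ and $(n-1)^{k-1}$ must be established separately, two distinct parameterizations will be employed---one of $W_Y$ and one of a face of $C_Y$---each giving rise to polynomial equations of degree $n-1$ with rational coefficients.

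For the bound $(n-1)^s$, the matrices $\{Y_j Y_j^t\}_{Y_j \in Y}$ span a $\mathbb{Q}$-rational subspace $\mathcal{V} \subset \mathrm{Sym}_n$ of dimension $k$, so its orthogonal complement $\mathcal{V}^{\perp}$ admits a basis $\{Q_1,\ldots,Q_s\} \subset GL(n,\mathbb{Q})$ with $s = \tfrac{n(n+1)}{2} - k$, while the unique element $Q_0 \in \mathcal{V}$ satisfying $\langle Q_0, Y_iY_i^t\rangle = 1$ for every $i$ is itself rational. Parameterizing
$$W_Y = \{Q_0 + t_1 Q_1 + \cdots + t_s Q_s \mid t_i \in \mathbb{R}\} \cap \Sigma_+,$$
Theorem~\ref{thm-vari} implies that $Q$ realizes the maximum on $W_Y$ of $f(t) := \det\bigl(Q_0 + \sum_i t_i Q_i\bigr) \in \mathbb{Q}[t_1,\ldots,t_s]$, a polynomial of degree $n$. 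The criticality system $\partial f/\partial t_i = 0$ then consists of $s$ rational polynomials of degree $n-1$ in the $s$ unknowns $t_1,\ldots,t_s$, and Lemma~\ref{lem-degree} yields $[K:\mathbb{Q}] \leq (n-1)^s$.

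For the complementary bound $(n-1)^{k-1}$, we argue as in the proof of Theorem~\ref{thm-deformation}: there exists a subset $\tilde{Y} = \{Y_{i_1},\ldots,Y_{i_k}\} \subset Y$ of $k$ elements such that $\tfrac{Q^{-1}}{n}$ lies in the relative interior of the simplex $C_{\tilde{Y}}$ and remains a critical point of $\det$ restricted to this simplex. Using the parameterization
$$P(\lambda) = Y_{i_k}Y_{i_k}^t + \sum_{j=1}^{k-1} \lambda_j \bigl(Y_{i_j}Y_{i_j}^t - Y_{i_k}Y_{i_k}^t\bigr),$$
the criticality conditions can be rewritten, via $P^{-1} = \mathrm{adj}(P)/\det(P)$, as
$$\bigl\langle \mathrm{adj}(P(\lambda)),\, Y_{i_j}Y_{i_j}^t - Y_{i_k}Y_{i_k}^t \bigr\rangle = 0, \qquad 1 \leq j \leq k-1,$$
which are $k-1$ rational polynomials of degree $n-1$ in the $k-1$ variables $\lambda_1,\ldots,\lambda_{k-1}$. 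Applying Lemma~\ref{lem-degree} places the $\lambda_j$ in an extension of degree at most $(n-1)^{k-1}$, and hence so also lie the entries of $P(\lambda) = \tfrac{Q^{-1}}{n}$ and consequently those of $Q$.

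The main technical point to verify is the coprimality (isolated common zero) hypothesis of Lemma~\ref{lem-degree}. This should follow from the strict concavity of $\ln\circ\det$ on $\Sigma_+$ (used in the proof of Theorem~\ref{thm-rational}): the maximum on each of the open convex sets $W_Y$ and $C_{\tilde{Y}}^{\circ}$ is unique, the Hessian of $\ln\circ\det$ at this maximum is negative definite, and so the Jacobian of the critical system is nonsingular at the relevant solution. Thus the common zero is scheme-theoretically isolated with local ring of length one, and B\'ezout's theorem in the form used by Lemma~\ref{lem-degree} delivers both degree bounds, completing the proof.
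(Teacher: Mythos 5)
Your proposal is correct and follows essentially the same route as the paper: parameterize $W_Y$ by $s$ rational matrices and a face $C_{\tilde Y}$ (via Theorem~\ref{thm-deformation}) by $k-1$ barycentric parameters, observe that criticality of $\det$ gives degree-$(n-1)$ rational polynomial systems, and apply Lemma~\ref{lem-degree}. Your extra verification of the coprimality/isolatedness hypothesis via strict concavity of $\ln\circ\det$ is a point the paper leaves implicit, and it is a welcome addition rather than a deviation.
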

\begin{proof}
Without loss of generality, we assume that 
$$\rk\{Y_1Y_1^t, Y_2Y_2^t, \cdots, Y_kY_k^t\}=k.$$

On the one hand, from the proof of Theorem~\ref{thm-deformation}, we may assume 
$\frac{Q^{-1}}{n}$ lies in the convex hull of 
$\{Y_1Y_1^t, Y_2Y_2^t, \cdots, Y_kY_k^t\}$, which may be  locally parameterized as 
$$Y_kY_k^t+\lambda_1(Y_1Y_1^t-Y_kY_k^t)+\lambda_2(Y_2Y_2^t-Y_kY_k^t)+\cdots+\lambda_{k-1}(Y_{k-1}Y_{k-1}^t-Y_kY_k^t).$$
Then it follows from Theorem~\ref{thm-vari} that, as the critical point of the determinant function restricted on this convex hull, the parameter of $\frac{Q^{-1}}{n}$ satisfies a system of $k-1$ polynomial equations in the variables $(\lambda_{1},\cdots,\lambda_{k-1})$, with 
rational coefficients 
and degree $n-1$. Therefore, by Lemma~\ref{lem-degree} we have $$[K:\mathbb{Q}]\leq (n-1)^{k-1}.$$   

On the other hand, {similar to} the approach introduced at the beginning of Section~\ref{sec-3tori}, we can parameterize $W_Y$ as follows, 
\[
W_Y=\{Q_0+t_1Q_1+\cdots+t_sQ_s|t_i\in \mathbb{R}\}\cap \Sigma_+, 
\]
where $Q_0
\in GL(n,\mathbb{Q})$ is the unique matrix in $\mathrm{Span}\{Y_jY_j^t\,|\, Y_j\in Y\}$ satisfying $$\langle Q_0,Y_iY_i^t\rangle=1,~~~1\leq i\leq \sharp(Y),$$ 
and $\{Q_1,\cdots,Q_s\}\subset GL(n,\mathbb{Q})$ is a basis of the orthogonal complement of $\mathrm{Span}\{Y_jY_j^t\,|\, Y_j\in Y\}$. 
Consequently, as the critical point of the determinant function restricted on $W_Y$, the parameter of $Q$ satisfies a system of $s$ polynomial equations in the variables  $(t_1,\cdots,t_s)$, with 
rational coefficients 
and degree $n-1$. So by Lemma~\ref{lem-degree}, we also have 
$$[K:\mathbb{Q}]\leq (n-1)^{s}.$$ 
\end{proof}

\begin{proof}[Proof of  Theorem~\ref{thm-2}]
       Note that $$\max_{n\leq k\leq \frac{n(n+1)}{2}}\min\{k-1, \frac{n(n+1)}{2}-k\}={[\frac{(n-1)(n+2)}{4}]}.$$
    It follows that the extension degree in Theorem~\ref{thm-degree} satisfies 
    \begin{equation}
   \label{eq-upper} [K:\mathbb{Q}]\leq (n-1)^{[\frac{(n-1)(n+2)}{4}]}.
    \end{equation}
\end{proof} 
 
  \begin{rem} {For the case of  $3$-tori, the upper bound given in \eqref{eq-upper} equals $4$, and it  is optimal by Theorem~\ref{thm-3irra} and examples in Section 4.    
  For the case of  $n$-tori, $n\geq4$,  it is unknown whether the corresponding upper bound is sharp. 
  }
\end{rem}

\textbf{Acknowledgement:}
The first author and the third author is supported by NSFC No. 12171473. The second author is supported by NSFC No. 12371052. 
\vskip 0.3cm

\end{document}